\newtheorem*{thm}{Theorem}
\newtheorem{lemma}{Lemma}
\newtheorem{proposition}{Proposition}
\newtheorem*{corollary}{Corollary}
\newtheorem*{conjecture}{Conjecture}
\theoremstyle{definition}
\newcommand{\klammern}[4][]%
{\ifthenelse{\equal{#1}{}}{\left#2}{\csname#1\endcsname#2}%
#4\ifthenelse{\equal{#1}{}}{\right#3}{\csname#1\endcsname#3}}
\newcommand{\betrag}[2][]{\klammern[#1]{\lvert}{\rvert}{#2}}
\newcommand{\conj}[1]{^{(#1)}}
\def\QQ{\mathbb Q}
\def\ZZ{\mathbb Z}
\def\RR{\mathbb R}
\begin{document}

\title{Another generalization of a theorem of Baker and Davenport}

\author{Bo He}
\address{Institute of Mathematics \newline \indent
Aba Teachers University \newline
\indent Wenchuan, Sichuan, P. R. China 623000 \newline
\indent and Department of Mathematics \newline \indent
Hubei University for Nationalities\newline
\indent Enshi, Hubei, P. R. China 445000}
\email{bhe@live.cn}

\author{\'Akos Pint\'er}
\address{Institute of Mathematics \newline \indent
 MTA-DE Research Group ``Equations, Functions and Curves'' \newline
\indent Hungarian Academy of Sciences and University of Debrecen \newline
\indent P. O. Box 12, H-4010 Debrecen, Hungary}
\email{apinter@science.unideb.hu}
\thanks{The first and the fourth authors were supported by Natural Science Foundation
of China (Grant No. 11301363), and Sichuan provincial scientific research and innovation team in Universities (Grant No. 14TD0040), and the Natural Science Foundation
of Education Department of Sichuan Province (Grants No. 13ZA0037, 13ZB0036). The second author supported in part by the Hungarian Academy of
Sciences, and OTKA grants K100339, NK101680, NK104208.}

\author{Alain Togb\'e}
\address{Department of Mathematics \newline
\indent Purdue University North Central \newline
\indent 1401 S. U.S. 421, Westville, IN 46391 USA}
\email{atogbe@pnc.edu}

\author{Shichun Yang}
\address{Institute of Mathematics \newline \indent
Aba Teachers University \newline
\indent Wenchuan, Sichuan 623000 \newline
\indent P. R. China}
\email{ysc1020@sina.com}

\subjclass{11D09, 11D45, 11B37, 11J86}
\date{\today}
\keywords{Diophantine $m$-tuple, Pell equation, Linear forms in logarithms}

\maketitle

\begin{abstract} Dujella and Peth\H{o}, generalizing a result of Baker and Davenport, proved that the set $\{1,3\}$ cannot be extended to a Diophantine quintuple. As a consequence of our main result, it is shown that the Diophantine pair $\{1,b\}$ cannot be extended to a Diophantine quintuple if $b-1$ is a prime.
\end{abstract}

\section{Introduction}\label{sec:1}

A set of $m$ distinct positive integers $\{a_1, \dots,  a_m\}$ is
called a {\it Diophantine $m$-tuple} if $a_i a_j +1$ is a perfect square. Diophantus studied sets of positive rational numbers with the same property,  particularly he found the set of four positive rational numbers $\left\{\frac{1}{16}, \frac{33}{16}, \frac{17}{4}, \frac{105}{16}\right\}$. But the first Diophantine quadruple was found by Fermat observing that the set $\{1, 3, 8, 120\}$ is a Diophantine quadruple. Moreover, Baker and Davenport, in their classical paper \cite{Baker-Davenport:1969}, proved that the set $\{1, 3, 8\}$ cannot be extended to a Diophantine quintuple.

In 1997, Dujella \cite{Dujella:1997-1} obtained that the Diophantine triples of the form $\{k-1, k+1, 4k\}$ cannot be extended to a Diophantine quintuple for $k \geq 2$. The Baker-Davenport's result corresponds to $k=2$. In 1998, Dujella and Peth\H{o} \cite{Dujella-Pethoe:1998} proved that the Diophantine pair $\{1, 3\}$ cannot be extended to a Diophantine quintuple. In 2008, Fujita \cite{Fujita} obtained a more general result by showing that the Diophantine pair $\{k - 1, k + 1\}$ cannot be extended to a Diophantine quintuple for any integer $k\geq 2$. In 2004, Dujella \cite{Dujella:2004} proved that there are only finitely many Diophantine quintuples. A folklore conjecture states that there does not exist a Diophantine quintuple, however, its proof seems well beyond the current techniques. Let
$$
d_{+}= d = a + b + c + 2abc +
2\sqrt{(ab + 1)(ac + 1)(bc + 1)}.
$$
A stronger version of this conjecture is the following
\begin{conjecture}
 If $\{a, b, c, d\}$ is a Diophantine quadruple and $d > \max\{a, b, c\}$, then $d = d_+$.
\end{conjecture}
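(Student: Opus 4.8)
The plan is to reduce the assertion to a system of simultaneous Pellian equations and then to force the unique upward extension by intersecting recurrence sequences. Write the quadruple as $a<b<c<d$ and record the six square conditions $ab+1=r^2$, $ac+1=s^2$, $bc+1=t^2$, together with $ad+1=x^2$, $bd+1=y^2$, $cd+1=z^2$. A direct computation (using $(ab+1)(ac+1)(bc+1)=r^2s^2t^2$) shows $d_+=a+b+c+2abc+2rst$, so the assertion $d=d_+$ is equivalent to pinning down $x,y,z$ uniquely. First I would eliminate $d$ between the last three equations pairwise, obtaining the generalized Pell relations $bx^2-ay^2=b-a$, $cx^2-az^2=c-a$ and $cy^2-bz^2=c-b$. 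Since $s^2-ac=1$, $t^2-bc=1$ and $r^2-ab=1$, the fundamental units governing these equations are $s+\sqrt{ac}$, $t+\sqrt{bc}$ and $r+\sqrt{ab}$, so the integer solutions of each split into finitely many binary recurrence sequences.

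Next I would express the common value $d$ as a term of two such sequences simultaneously. Concretely, the solutions of $cx^2-az^2=c-a$ satisfy $x\sqrt{c}+z\sqrt{a}=(x_0\sqrt{c}+z_0\sqrt{a})(s+\sqrt{ac})^{m}$ for suitable initial data, while the solutions of $cy^2-bz^2=c-b$ satisfy $y\sqrt{c}+z\sqrt{b}=(y_0\sqrt{c}+z_0\sqrt{b})(t+\sqrt{bc})^{n}$. Matching the value $d=(z^2-1)/c$ across the two sequences yields an equation between two exponentially growing quantities. Taking logarithms converts this into a linear form
\[
\Lambda = m\log(s+\sqrt{ac}) - n\log(t+\sqrt{bc}) + \log\kappa,
\]
where $\kappa$ is an explicit algebraic number built from the initial terms, and the Diophantine conditions force $\Lambda$ to be extraordinarily small, of size comparable to $(s+\sqrt{ac})^{-2m}$.

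From here the standard machinery applies: Baker's theorem on linear forms in logarithms gives a lower bound for $|\Lambda|$ of the shape $\exp\bigl(-C\log\max(m,n)\bigr)$, and comparing with the exponential upper bound bounds $\max(m,n)$ explicitly. A Baker--Davenport reduction, applied to the continued fraction expansion of $\log(s+\sqrt{ac})/\log(t+\sqrt{bc})$, then collapses this bound to a very small range, after which one checks the finitely many survivors directly and confirms that the only admissible solution is the one giving $d=d_+$.

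The \emph{hard part} --- and the reason this remains a conjecture rather than a theorem --- is that the entire chain is unconditional only once one controls the size of $c$ relative to $a$ and $b$. Baker's estimate together with the reduction closes the argument when $c$ is large enough in terms of $a,b$, and the explicit structure of triples handles the extreme values $c=c_{\pm}$; but for arbitrary $a,b$ there is an intermediate range of $c$ in which the combinatorial gap principles are too weak to bound $c$ while the linear-form estimate is still too weak to conclude. This is precisely where the method breaks in full generality. In the special situations of interest here the smallness of $a$ (in the paper's application $a=1$) supplies the missing upper bound on $c$ and lets the argument go through.
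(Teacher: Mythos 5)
You have not proved this statement, and neither does the paper: it appears there as a \emph{conjecture}, introduced as a strengthening of the folklore quintuple conjecture whose proof, the authors say, ``seems well beyond the current techniques.'' The paper's actual theorem is a very special case, namely $a=1$ with the triples $\{1,b,c_k^{(\pm)}\}$ that arise when the solutions of $t^2-bs^2=1-b$ fall into at most two classes (e.g.\ when $b-1$ is prime). Your final paragraph concedes exactly this, so the verdict is that your text is a strategy sketch together with an accurate explanation of why the strategy does not close --- which is not a proof of the statement.

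The gap is worth naming precisely, because it is not merely ``an intermediate range of $c$.'' In the lower bounds of Baker type (Matveev, Laurent), the constant is not absolute: it involves the heights of $\alpha_1=s+\sqrt{ac}$, $\alpha_2=t+\sqrt{bc}$ and of the algebraic number $\kappa$ built from the initial terms, i.e.\ quantities of size $\log a$, $\log b$, $\log c$. Comparing with the upper bound $|\Lambda|<(s+\sqrt{ac})^{-2m}$ therefore bounds the indices $m,n$ only polynomially in $\log c$, never absolutely; one then needs a gap principle forcing $m$ to grow fast enough with $c$ to produce a contradiction for all large $c$, and this is what fails for a general triple $\{a,b,c\}$. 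Worse, the Baker--Davenport reduction you invoke requires the continued fraction expansion of a \emph{numerically specified} real number $\log\alpha_1/\log\alpha_2$, so it can only ever be executed for finitely many explicit triples, while the conjecture quantifies over all of them. The paper escapes both obstructions only because $a=1$ together with the two-class structure of $t^2-bs^2=1-b$ confines $c$ to the $14$ one-parameter families $c_k^{(\pm)}$, $1\le k\le 7$ (using Fujita's criterion that $c>\max\{100b^7,20b^8\}$ forces $d=d_+$); linear forms in \emph{two} logarithms then bound the single remaining parameter $r$, and a finite reduction handles small $r$. Nothing in your sketch supplies a substitute for this reduction to finitely many parameterized families, and that missing step is exactly why the statement remains a conjecture.
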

\noindent We introduce the concept of {\it regular quadruple}.  A Diophantine quadruple $\{a, b, c, d\}$ is regular if and
only if $(a + b - c - d)^2 = 4(ab + 1)(cd + 1)$. Therefore, the
quadruples in the above conjecture are regular.

The aim of this paper is to consider the extensibility of the Diophantine pair $ \{1,b\}$ and to give a new generalization of the mentioned theorems by Baker and Davenport \cite{Baker-Davenport:1969} and Dujella and Peth\H{o} \cite{Dujella-Pethoe:1998}.

We remark that there exists a related result for higher power Diophantine pairs. Bennett \cite{Bennett:2006} proved that the pairs $\{1,b\}$ cannot add a positive integer $c$ such that $b+1,c+1,bc+1$ are both $k$th power, for any integer $k\ge3$. For other results concerning Diophantine $m$-tuples and their generalizations we refer the interested reader to the homepage http://web.math.pmf.unizg.hr/ $\tilde{} $ duje/dn.html.

\vspace{2mm}

If $\{1,b,c\}$ is a Diophantine triple, then there exists positive
integers $r,s$ and $t$ such that
$$ b+1=r^2,\quad c+1=s^2,\quad bc+1=t^2. $$
Thus we have
\begin{equation}\label{eq:st}
    t^2 - bs^2 = 1-b.
\end{equation}
Usually, we cannot get all pairs $(s,t)$ solutions of Pell equation
\eqref{eq:st} without a condition on the parameter $b$. However, when
$b-1$ is a prime power, the solutions $(s,t)$ to equation \eqref{eq:st} are easy to be parameterized by $b$. In fact, we know that there are at most $2^{\omega(l)}$ (here $\omega(l)$ denotes the number of distinct prime factors of $l$) classes solutions to the Pell equation $x^2 - Dy^2 = l$ with $(x,y)=1$. This leads to confirm that all positive solutions to equation \eqref{eq:st} can be expressed as
\begin{equation}\label{eq:express}
    t+s\sqrt{b} = (\pm 1 + \sqrt{b})(r+\sqrt{b})^k = (\pm 1 +
    \sqrt{b})(T_k + U_k\sqrt{b}), \quad k\ge 0,
\end{equation}
where $(T_k,U_k)$ is the $k$th nonnegative  integer solutions to the
Pell equation $T^2 - bU^2=T^2-(r^2-1)U^2=1$. One can show that
\begin{equation}\label{eq:T}
  T_0=1,\quad T_1=r,\quad T_{k+2}=2rT_{k+1}-T_k, \quad k\ge 0
\end{equation}
\begin{equation}\label{eq:U}
   U_0=0,\quad U_1=1,\quad U_{k+2}=2rU_{k+1}-U_k, \quad k\ge 0.
\end{equation}
Thus, we have
\begin{equation}\label{eq:st-form}
(s,t)=(s_k^{(\pm)},
t_k^{(\pm)})= (T_k \pm U_k, \pm T_k + bU_k)
\end{equation}
 and
\begin{equation}\label{eq:c-form}
c_k^{(\pm)}=c=s^2-1 =  T_k^2 \pm 2T_kU_k  + U_k^2 - 1 =  r^2U_k^2
\pm 2T_kU_k.
\end{equation}
We study the extensibility of the Diophantine
triples
$$
\{1,b,c_k^{(\pm)}\}, \quad \mbox{for} \,\, k=1,2,\ldots
$$
and prove the following
\begin{thm}\label{thm:1}
If $\{1,b,c_k^{(\pm)},d\}$ is a Diophantine quadruple with $d>c_k^{(\pm)}$, then $d=c_{k+1}^{(\pm)}$.
\end{thm}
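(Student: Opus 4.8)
The plan is to convert the quadruple condition into a pair of simultaneous Pellian equations sharing a common unknown, and then to study when the two resulting binary recurrence sequences meet. Write $c = c_k^{(\pm)}$ and suppose $\{1,b,c,d\}$ is a Diophantine quadruple with $d > c$. Then there are positive integers $x,y,z$ with $d+1 = x^2$, $bd+1 = y^2$ and $cd+1 = z^2$, and eliminating $d$ yields
\begin{equation*}
 z^2 - c\,x^2 = 1 - c \qquad\text{and}\qquad c\,y^2 - b\,z^2 = c - b,
\end{equation*}
two generalized Pell equations in both of which the common variable $z$ occurs. Since $s^2 - c = 1$ and $t^2 - bc = 1$, the relevant fundamental units are $s + \sqrt{c}$ and $t + \sqrt{bc}$, and the solutions of each equation split into finitely many classes, each a binary recurrence. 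Nagell-type bounds on the fundamental solution of a class discard all but the classes with smallest initial term, leaving
\begin{equation*}
 v_0 = 1,\ v_1 = s + c,\ v_{m+2} = 2s\,v_{m+1} - v_m \qquad\text{and}\qquad w_0 = 1,\ w_1 = t + c,\ w_{n+2} = 2t\,w_{n+1} - w_n
\end{equation*}
(together with the remaining sign choices coming from the $\pm$). A quadruple then corresponds to a coincidence $z = v_m = w_n$.

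Next I would locate the small solutions explicitly: $v_0 = w_0 = 1$ gives the degenerate $d = 0$, and the first genuine coincidences reproduce precisely the two regular extensions $d = c_{k-1}^{(\pm)}$ and $d = c_{k+1}^{(\pm)}$ of $\{1,b,c_k^{(\pm)}\}$; since $c_{k-1}^{(\pm)} < c_k^{(\pm)} < c_{k+1}^{(\pm)}$, the hypothesis $d > c_k^{(\pm)}$ retains only $d = c_{k+1}^{(\pm)}$. The whole content of the theorem is therefore that no coincidence with large indices occurs. To control the indices I would first reduce modulo $c$: the closed forms give $v_m \equiv s^m$ and $w_n \equiv t^n \pmod{c}$, and as $s^2 \equiv t^2 \equiv 1 \pmod{c}$ with $1 < t < c$ (in all but finitely many small triples), a common value forces $m \equiv n \pmod 2$. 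Refining the reduction to modulus $c^2$ produces congruences that are linear in the indices, and these tie $m$ and $n$ to one another in the manner of Dujella and Peth\H{o}, collapsing the a priori two-parameter problem to essentially one parameter and supplying the gap used below.

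For the remaining possibility of large $m,n$ I would pass to the closed forms and set up the linear form in logarithms
\begin{equation*}
 \Lambda = m\log(s + \sqrt{c}) - n\log(t + \sqrt{bc}) + \log\frac{(1+\sqrt{c})\sqrt{b}}{\sqrt{c}+\sqrt{b}},
\end{equation*}
whose size is of order $(s+\sqrt{c})^{-2m}$ by the equation $v_m = w_n$, hence exponentially small in $m$. A lower bound for $|\Lambda|$ from the theory of linear forms in logarithms (Baker--W\"ustholz or Matveev) then forces $m$ below an explicit bound depending on $b$ and $c$. Finally I would remove this bound: the index relation from the congruence step, together with a gap principle exploiting the well-separated growth rates $s+\sqrt{c}$ and $t+\sqrt{bc} > s+\sqrt{c}$, shows that any solution beyond the regular one must have index exceeding the linear-forms bound, a contradiction; for the finitely many bounded parameters not covered this way a Baker--Davenport reduction (continued fractions or LLL) clears the range.

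The step I expect to be the main obstacle is making this endgame uniform in the unbounded parameter $b$ (and in $k$). A single numerical reduction is unavailable because $b$ ranges over infinitely many values, so the congruence relating $m$ and $n$ must be sharp enough, and the gap principle strong enough, that together they bridge the entire interval left open by the linear-forms bound for all but finitely many $(b,k)$. The finitely many small triples (where, for instance, $t < c$ may fail), together with the bookkeeping of the four $\pm$ sign combinations, must then be disposed of by hand or by a direct reduction, and it is precisely in controlling these low-lying cases uniformly that the real work lies.
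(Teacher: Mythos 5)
Your reduction to Pellian coincidences, the identification of the regular solutions, and the general plan (fundamental-solution classes, linear forms in logarithms, Baker--Davenport for a residual finite set) all match the paper in spirit, but the one step you flag as ``the main obstacle'' --- uniformity in the unbounded parameter $b$ --- is precisely the content of the theorem, and the mechanism you propose for it genuinely fails. Your gap principle is the Dujella--Peth\H{o} congruence method modulo $c$ and $c^2$. For that method to force a lower bound on the indices, the coefficients in the resulting congruence (which involve the fundamental solutions, here of size up to $t\approx\sqrt{bc}$, and $s\approx\sqrt{c}$) must be small compared with the modulus; this only yields information for indices up to roughly $(c/b)^{1/4}$. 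In the case $k=1$ of the theorem one has $c=c_1^{(\pm)}=r^2\pm 2r$ while $b=r^2-1$, so $c/b$ is bounded and the congruence method gives no lower bound whatsoever; meanwhile your three-logarithm (Matveev/Baker--W\"ustholz) upper bound leaves a window of size growing like a power of $\log b$. Since $b$ runs through infinitely many values, no finite reduction can bridge that window, and your sketch offers nothing else --- your closing paragraph concedes exactly this. A secondary gap: you never reduce the range of $k$ (the paper does so at the outset via Fujita's Proposition 4.6, which gives $d=d_+$ once $c>\max\{100b^7,20b^8\}$, leaving only $1\le k\le 7$), and your determination of the solution classes by ``Nagell-type bounds'' is not enough --- the paper's Lemmas on $z_0,z_1,y_2$ require an inductive hypothesis on the nonextendability of smaller triples, not merely size estimates.

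The paper closes the uniformity gap with a device absent from your proposal, and it is worth recording because it is the heart of the proof. It takes $x$ (not $z$) as the common variable, so the two Pell units are $\alpha=r+\sqrt{b}$ and $\beta=s+\sqrt{c}$, which are \emph{nearly multiplicatively dependent}: $\beta=\alpha^k\bigl(1+O(1/\sqrt{b})\bigr)$. This near-dependence yields a congruence-free gap principle: setting $\Delta=l-\lambda-km$, the paper shows $\Delta\neq 0$ for $m>1$ and then $m>0.98\,|\Delta|\sqrt{b}\log\alpha$. Crucially, the same near-dependence lets one rewrite
$$
\Lambda=\log\bigl(\alpha^{\Delta+\lambda}\gamma\bigr)-m\log\bigl(\beta/\alpha^{k}\bigr)
$$
as a linear form in \emph{two} logarithms, so Laurent's theorem (quadratic in $\log b'$, with absolute constants) applies and gives $m\ll(|\Delta+\lambda|+2+k)\log\alpha$ with an absolute implied constant. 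Comparing the two bounds on $m$ cancels the factor $\log\alpha$ and forces $\sqrt{b}$, hence $r$, below an absolute constant depending only on $k\le 7$. Only for this genuinely finite set of parameters does the paper invoke Matveev's three-logarithm bound, the congruence $m\equiv 0,\pm 1\pmod{r}$, and the computational Baker--Davenport reduction. In short: your architecture is the classical one, but the two keystones --- the near-commensurability gap principle and the two-logarithm reformulation --- are missing, and without them the plan cannot be completed, not even in principle for $k=1$.
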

\noindent One can check that the Diophantine quadruple $\{1,b,c_k^{(\pm)},c_{k+1}^{(\pm)}\}$ is regular.

\vspace{2mm}

If $b-1$ is a prime then we recall that the corresponding generalized Pell equation possesses at most two classes of solutions. Thus, the next result is a straightforward consequence of Theorem.

\begin{corollary}\label{cor:1}
If $b-1$ is a prime, then the pair $\{1,b\}$ cannot be extended to a Diophantine quintuple. Moreover, any Diophantine quadruple which contains the pair $\{1, b\}$ is regular.
\end{corollary}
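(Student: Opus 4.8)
The plan is to deduce both assertions directly from the Theorem, using only the two-class structure of the solutions of \eqref{eq:st}. First I would record the precise consequence of the hypothesis that $b-1$ is prime: since $\omega(b-1)=1$, the generalized Pell equation \eqref{eq:st} has at most $2^{\omega(b-1)}=2$ classes of coprime solutions, and these classes are represented by the two fundamental solutions $\pm 1+\sqrt b$, which are exactly the ones enumerated in \eqref{eq:express} through the choice of sign. Hence every $c$ for which $\{1,b,c\}$ is a Diophantine triple satisfies $c=c_k^{(+)}$ or $c=c_k^{(-)}$ for some $k\ge 1$; in other words, the admissible third elements are exhausted by the two families $\{c_k^{(+)}\}_{k\ge 1}$ and $\{c_k^{(-)}\}_{k\ge 1}$.

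Next I would settle the regularity claim. Let $\{1,b,c,d\}$ be any Diophantine quadruple containing the pair $\{1,b\}$, and name its two entries other than $1,b$ so that $c<d$. Since $\{1,b,c\}$ is a Diophantine triple, the first step gives $c=c_k^{(\epsilon)}$ for some sign $\epsilon\in\{+,-\}$ and some $k$. Then $\{1,b,c_k^{(\epsilon)},d\}$ is a Diophantine quadruple with $d>c_k^{(\epsilon)}$, so the Theorem forces $d=c_{k+1}^{(\epsilon)}$. Thus the quadruple equals $\{1,b,c_k^{(\epsilon)},c_{k+1}^{(\epsilon)}\}$, which is regular by the remark following the Theorem. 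Note that the location of $b$ relative to $c,d$ is irrelevant, since the Theorem only requires $d>c_k^{(\epsilon)}$.

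For the non-extendability, I would argue by contradiction: suppose $\{1,b,c,d,e\}$ is a Diophantine quintuple, and name the three entries other than $1,b$ so that $c<d<e$. As above $c=c_k^{(\epsilon)}$ for some $\epsilon$ and $k$. Both $\{1,b,c,d\}$ and $\{1,b,c,e\}$ are Diophantine quadruples whose fourth element exceeds $c_k^{(\epsilon)}$, so the Theorem applies to each and yields $d=c_{k+1}^{(\epsilon)}$ and $e=c_{k+1}^{(\epsilon)}$. Hence $d=e$, contradicting $c<d<e$. Therefore no such quintuple can exist.

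The substantive work has already been done by the Theorem; what remains is a short rigidity argument resting on the fact that the Theorem pins down the unique ``upward'' extension of a triple $\{1,b,c_k^{(\epsilon)}\}$. The one point genuinely requiring care is the reduction in the first paragraph, namely that for $b-1$ prime the two sign-classes of \eqref{eq:express} really capture \emph{every} coprime solution of \eqref{eq:st}, so that no Diophantine triple on $\{1,b\}$ escapes the families $c_k^{(\pm)}$. Granting this, the uniqueness of the larger element supplied by the Theorem immediately forbids two distinct larger entries and forces every quadruple to consist of a consecutive, hence regular, pair.
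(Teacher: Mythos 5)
Your proof is correct and follows exactly the route the paper intends: the paper presents the corollary as a ``straightforward consequence'' of the Theorem after recalling that, for $b-1$ prime, equation \eqref{eq:st} has at most two classes of solutions, so every third element of a triple containing $\{1,b\}$ is some $c_k^{(\pm)}$. Your writeup simply makes explicit the rigidity argument the paper leaves to the reader --- the Theorem's unique upward extension $c_{k+1}^{(\epsilon)}$ rules out two distinct larger elements (hence no quintuple) and forces every quadruple to be the regular one $\{1,b,c_k^{(\epsilon)},c_{k+1}^{(\epsilon)}\}$.
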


 By Proposition 4.6 of \cite{Fujita-1}, if $c>\max\{100b^7,
20b^8\}$, then $d=d_{+}$. By \eqref{eq:express}, if $k\ge 8$, we get
$$
s \ge \frac{1}{2\sqrt{b}}(\sqrt{b}-1)(2\sqrt{b})^8>2^6b^4
$$
and $c=s^2-1>100b^8$. Consequently, we only need to consider $1\le k\le7$. There are $14$ cases:
\begin{align*}
c_1^{(\pm)} =\!\!  & \quad \!\! r^2 \pm 2r,\\
c_2^{(\pm)}=\!\!  & \quad \!\!4r^4\pm (8r^3-4r),\\
c_3^{(\pm)}=\!\!  & \quad \!\!16r^6 - 8r^4 + r^2 \pm (32r^5 - 32r^3 + 6r),\\
c_4^{(\pm)}=\!\!  & \quad \!\! 64r^8 - 64r^6 + 16r^4 \pm ( 128r^7 - 192r^5 + 80r^3 - 8r),\\
c_5^{(\pm)}=\!\!  & \quad \!\!  256r^{10} - 384r^8 + 176r^6 - 24r^4 + r^2  \pm ( 512r^9 - 1024r^7 + 672r^5 - 160r^3 + 10r),\\
c_6^{(\pm)}=\!\!  & \quad \!\! 1024r^{12} - 2048r^{10} + 1408r^8 - 384r^6 + 36r^4  \\
\!\!  & \pm \!\! ( 2048r^{11} - 5120r^9 + 4608r^7 - 1792r^5 + 280r^3 - 12r),\\
 c_7^{(\pm)}=\!\!  & \quad \!\!4096r^{14} -
10240r^{12} + 9472r^{10} -
3968r^8 + 736r^6 - 48r^4 + r^2\\
\!\!  & \pm \!\!( 8192r^{13} - 24576r^{11} + 28160r^9 - 15360r^7 +
4032r^5 - 448r^3 + 14r).
\end{align*}
Notice that $1<b<c_k^{(\pm)}$ except for $1<c_1^{(-)}<b$. We set $r'=r-1$ in the case $c=c_1^{(-)}$. Then, the triple $\{1,c,b\}$ is $\{1,r'^2-1, r'^2+2r'\}$. It is similar to $\{1,b,c_1^{(+)}\}$.

\section{The system of Pell equations}\label{sec:2}

Let us consider a Diophantine triple $\{1, b, c\}$.  In order to extend this triple to a Diophantine quadruple $\{1,b,c,d\}$, we have to solve the system
\begin{equation}\label{eq:system-d}
\notag    d+1=x^2,\quad bd+1=y^2,\quad cd+1=z^2,
\end{equation}
in integers $x, y, z$. Eliminating $d$, we obtain the following
system of Pell equations
\begin{eqnarray}
\label{eq:system-zx}     z^2-cx^2 &=& 1-c,\\
\label{eq:system-zy}    bz^2-cy^2 &=& b-c,\\
\label{eq:system-yx}     y^2-bx^2 &=& 1-b.
\end{eqnarray}
By \cite[Lemma~1]{Dujella:2001}, if $(z_0, x_0), (z_1, y_1)$ and $(y_2, x_2)$ are the minimal solutions of \eqref{eq:system-zx}, \eqref{eq:system-zy} and \eqref{eq:system-yx},  respectively, then all solutions of \eqref{eq:system-zx}, \eqref{eq:system-zy} and \eqref{eq:system-yx} are given by
\begin{eqnarray}
\label{eq:solutions-zx}     z+x\sqrt{c} &=& (z_0+x_0\sqrt{c})(s+\sqrt{c})^{m},\quad m\ge 0,\\
\label{eq:solutions-zy}  z\sqrt{b}+y\sqrt{c} &=& (z_1\sqrt{b}+y_1\sqrt{c})(t+\sqrt{bc})^{n},\quad n\ge 0,\\
\label{eq:solutions-yx}     y+x\sqrt{b} &=& (y_2+x_2\sqrt{b})(r+\sqrt{b})^{l},\quad l\ge 0.
\end{eqnarray}

Using the above expressions, refer to \cite[Lemma~1]{Dujella:2001}, it
is easy to get the following corresponding sequences to solutions.
All solutions of \eqref{eq:solutions-zx} are given by $z=V_m$ for
some integer $m \ge 0$, where
$$
V_0=z_0,\quad V_1=sz_0+cx_0,\quad V_{m+2}=2sV_{m+1}-V_m,
$$
and all solutions of \eqref{eq:solutions-zy} are given by $z=W_n$
for some integer $n \ge 0$, where
$$
W_0=z_1,\quad W_1=tz_1+cy_1,\quad W_{n+2}=2tW_{n+1}-W_n.
$$
From the sequence $\{W_n\}$ the corresponding solutions of \eqref{eq:solutions-zy} are also given by $y=A_n, \,n\ge 0$, where
\begin{equation}\label{eq:A}
A_0=y_1,\quad A_1=ty_1 + bz_1,\quad A_{n+2}=2tA_{n+1}-A_n.
\end{equation}
From \eqref{eq:solutions-yx}, we conclude that $y=B_l$, for some
integer $l\ge 0$, where
\begin{equation}\label{eq:B}
B_0=y_2,\quad B_1=ry_2+bx_2,\quad B_{l+2}=2rB_{l+1}-B_m.
\end{equation}

Similar to \cite[Lemma~5]{Fujita}, we have the following result.
\begin{lemma}\label{lem:solutions-z0z1}
Assume that $\{1,b,c',c\}$ is not a Diophantine quadruple for any
$c'$ with $0<c'<c_{k-1}^{(\pm)}$. Then, neither $V_{2m+1}=W_{2n}$
nor $V_{2m}=W_{2n+1}$. Moreover,

(1) If $V_{2m}=W_{2n}$ has a solution, then we have $z_0=z_1=\pm 1$.

(2) If $V_{2m+1} = W_{2n+1}$ has a solution, then we have $z_0=\pm t$, $z_1 = \pm s$ and $z_0z_1>0$.
\end{lemma}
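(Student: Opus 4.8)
The plan is to pin down the fundamental solutions $z_0$ and $z_1$ by combining a congruence analysis of the recurrences $\{V_m\}$ and $\{W_n\}$ with the size restrictions coming from the theory of generalized Pell equations and the inductive hypothesis. First I would record how the two recurrences reduce modulo $c$. Writing $(s+\sqrt c)^m=P_m+Q_m\sqrt c$, the solution formula \eqref{eq:solutions-zx} gives $V_m=z_0P_m+cx_0Q_m$, and since $P_m^2-cQ_m^2=1$ forces $P_m^2\equiv1\pmod c$ one finds $P_{2m}\equiv1$ and $P_{2m+1}\equiv s\pmod c$. Because $s^2=c+1\equiv1\pmod c$ and, for the companion multiplier $(t+\sqrt{bc})^n$ attached to \eqref{eq:solutions-zy}, $t^2=bc+1\equiv1\pmod c$, the same computation yields
\[ V_{2m}\equiv z_0,\quad V_{2m+1}\equiv sz_0,\quad W_{2n}\equiv z_1,\quad W_{2n+1}\equiv tz_1 \pmod c. \]

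Second, I would enumerate the admissible fundamental solutions. Equation \eqref{eq:system-zx} has the explicit solutions $(z,x)=(\pm1,\pm1)$ and $(z,x)=(\pm t,\pm r)$, using $t^2-cr^2=1-c$, while \eqref{eq:system-zy} has $(z,y)=(\pm1,\pm1)$ and $(z,y)=(\pm s,\pm r)$, using $bs^2-cr^2=b-c$. Invoking the standard bound on the size of a fundamental solution of a generalized Pell equation together with the hypothesis — any further solution class would produce a Diophantine quadruple $\{1,b,c',c\}$ with $0<c'<c_{k-1}^{(\pm)}$, which is excluded — I would conclude $z_0\in\{\pm1,\pm t\}$ and $z_1\in\{\pm1,\pm s\}$. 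I also note $\gcd(s,c)=\gcd(t,c)=1$, since $s^2-c=1$ and $t^2-bc=1$.

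Third, I would read off the matching-parity cases directly from the residues above. If $V_{2m}=W_{2n}$ then $z_0\equiv z_1\pmod c$; as $|z_0|,|z_1|<c$ and the only value common to the two admissible lists is $\pm1$, this gives $z_0=z_1=\pm1$, which is $(1)$. If $V_{2m+1}=W_{2n+1}$ then $sz_0\equiv tz_1\pmod c$; writing $z_0=\varepsilon t$ and $z_1=\delta s$ with $\varepsilon,\delta\in\{\pm1\}$ this becomes $(\varepsilon-\delta)st\equiv0\pmod c$, and since $\gcd(st,c)=1$ we get $\varepsilon=\delta$, i.e. $z_0=\pm t$, $z_1=\pm s$ and $z_0z_1>0$, which is $(2)$. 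The stray possibilities $s\equiv\pm1$, $t\equiv\pm1$, $s\equiv\pm t\pmod c$ are ruled out by $1<s<t<c$.

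Finally — and this is the step I expect to be the main obstacle — I would treat the non-matching parities $V_{2m+1}=W_{2n}$ and $V_{2m}=W_{2n+1}$. Here the reduction modulo $c$ is not decisive: the residues $sz_0$ and $z_1$ (resp. $z_0$ and $tz_1$) can coincide modulo $c$ for admissible $z_0,z_1$ without the two sequences actually meeting. To separate them I would sharpen the congruence to modulus $2c$: from $P_{2m}=2P_m^2-1=1+2cQ_m^2\equiv1\pmod{2c}$, together with the fact that $Q_{2m}$ is even while $Q_{2m+1}$ is odd, one gets $V_{2m}\equiv z_0$ and $V_{2m+1}\equiv sz_0+cx_0\pmod{2c}$, and likewise $W_{2n+1}\equiv tz_1+cy_1\pmod{2c}$. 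Substituting the finitely many candidate pairs $(z_0,x_0)$ and $(z_1,y_1)$ then forces an impossible congruence modulo $2c$ in each non-matching case, so neither $V_{2m+1}=W_{2n}$ nor $V_{2m}=W_{2n+1}$ can hold. The delicate point is to check that the extra $2$-adic information carried by the $cx_0$ and $cy_1$ terms genuinely distinguishes the two parities for every admissible sign choice; this is where the precise values of $x_0,y_1$ and the parities of $r,s,t$ enter.
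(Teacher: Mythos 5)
Your plan fails at the second step, and the failure is structural, not cosmetic. You assert that the hypothesis of the lemma, combined with the standard bound on fundamental solutions, forces $z_0\in\{\pm1,\pm t\}$ and $z_1\in\{\pm1,\pm s\}$, because ``any further solution class would produce a Diophantine quadruple $\{1,b,c',c\}$.'' That inference is invalid: a solution class of the \emph{single} equation $z^2-cx^2=1-c$ with fundamental solution $(z_0,x_0)$ produces only $c'=x_0^2-1=(z_0^2-1)/c$ with $c'+1$ and $cc'+1$ square, i.e.\ an extension of the pair $\{1,c\}$ to a triple $\{1,c',c\}$; it carries no information about $bc'+1$, so no quadruple arises and the hypothesis cannot touch the class. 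Moreover the lists themselves are false: for $r=3$, $b=8$, $c=c_2^{(-)}=120$ the lemma's hypothesis holds (there is no quadruple $\{1,8,c',120\}$ with $0<c'<c_1^{(-)}=3$), yet $z^2-120x^2=-119$ has, besides $(\pm1,1)$, the fundamental solutions $(\pm19,2)$ coming from the triple $\{1,3,120\}$, and $19\notin\{1,t\}=\{1,31\}$; in fact $(\pm t,r)=(\pm31,3)$ violates the minimality bound $|z_0|\le\sqrt{(s-1)(c-1)/2}=\sqrt{595}<25$ and is not a fundamental solution at all in this case. The proof the paper actually points to (it gives none of its own, citing Fujita's Lemma~5) runs in the opposite logical order: one \emph{assumes} a common solution $V_m=W_n$ of the stated parities exists, uses congruences modulo $2c$, namely $V_{2m}\equiv z_0$, $V_{2m+1}\equiv sz_0+cx_0$, $W_{2n}\equiv z_1$, $W_{2n+1}\equiv tz_1+cy_1\pmod{2c}$, together with the bounds on $|z_0|,|z_1|$ to tie the two fundamental solutions to each other, and only then obtains a $d'$ with $d'+1$, $bd'+1$, $cd'+1$ all squares --- a quadruple $\{1,b,d',c\}$ that the hypothesis forbids unless the configuration is one of those in (1) or (2). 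The hypothesis can only ever be brought to bear on a \emph{pair} of classes sharing a solution; applying it to one equation at a time, as you do, is the key idea inverted.

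The same inversion makes your fourth step unrecoverable as sketched. Excluding the mixed parities is not a matter of $2$-adic bookkeeping over a finite candidate list: your list is wrong, and, more fundamentally, mixed-parity common solutions can genuinely occur, so only the quadruple hypothesis can exclude them. In the example above one has $V_0=W_1=19$, where $V$ comes from the class of $(19,2)$ for $z^2-cx^2=1-c$ and $W$ from the class of $(-s,r)=(-11,3)$ for $bz^2-cy^2=b-c$: a solution of type $V_{2m}=W_{2n+1}$, corresponding to Fermat's quadruple $\{1,3,8,120\}$. No congruence computation alone can rule out something that actually happens; the point you flagged as ``delicate'' is exactly where the hypothesis on quadruples must be invoked, and your outline never does so. (This example also shows that the lemma's hypothesis has to be read with care when $k=2$ and the sign is $-$, since then $c_{k-1}^{(-)}<b$ and the Fujita-type argument needs modification; but that is an issue with the paper's formulation, not a repair of your argument.)
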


Using Lemma~\ref{lem:solutions-z0z1}, we obtain
\begin{lemma}\label{lem:solutions-y2}
Assume that $\{1,b,c',c\}$ is not a Diophantine quadruple for any
$c'$ with $0<c'<c_{k-1}^{(\pm)}$, for  $b\ge 8$. $A_{2n}=B_{2l+1}$ has no solution. Moreover, if $A_{2n}=b_{2l}$ then $y_2=1$. In other cases, we have $y_2=\pm 1$.
\end{lemma}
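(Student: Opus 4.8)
The plan is to mirror the structure of Lemma~\ref{lem:solutions-z0z1} but now working with the two sequences $\{A_n\}$ and $\{B_l\}$ that both represent the common value $y$. The governing recurrences are $A_{n+2}=2tA_{n+1}-A_n$ coming from \eqref{eq:solutions-zy} and $B_{l+2}=2rB_{l+1}-B_l$ coming from \eqref{eq:solutions-yx}. The first step is to reduce each sequence modulo a convenient integer and track the parity structure. Since $t+s\sqrt b=(\pm1+\sqrt b)(T_k+U_k\sqrt b)$ links $s,t,r,b$, one can compute $A_n \pmod{\text{small modulus}}$ and $B_l \pmod{\text{small modulus}}$ and show that the congruence classes of $A_{2n}$ and $B_{2l+1}$ are incompatible; this is exactly the kind of parity/congruence obstruction used to rule out $V_{2m+1}=W_{2n}$ and $V_{2m}=W_{2n+1}$ in the previous lemma, and I expect the same two moduli (something like modulo $2$ together with modulo $4t$ or modulo $c$) to separate the even-indexed from the odd-indexed terms.

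Once the mixed-parity cases $A_{2n}=B_{2l+1}$ are excluded, I would examine the two surviving same-parity cases. For $A_{2n}=B_{2l}$ I would evaluate both sides at the bottom of the recurrence, i.e. compare $A_0=y_1$ and $A_1$ against $B_0=y_2$ and $B_1$, and use the explicit relation $y^2-bx^2=1-b$ from \eqref{eq:system-yx} together with the minimality of the fundamental solution $(y_2,x_2)$. The claim $y_2=1$ in the $A_{2n}=B_{2l}$ case should come from pinning down the fundamental solution: the smallest positive $y$ solving $y^2-bx^2=1-b$ forces $y_2=1$ (with $x_2=1$), so matching the even-indexed $A$ and $B$ terms at the initial level forces the $B$-side to start from this minimal solution. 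For the remaining cases I would argue that the only other admissible fundamental solutions have $|y_2|=1$, giving $y_2=\pm1$, by invoking the at-most-two-classes structure of the generalized Pell equation recalled in the introduction.

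The congruence bookkeeping will use the hypotheses in an essential way. The assumption that $\{1,b,c',c\}$ is not a Diophantine quadruple for any $c'$ with $0<c'<c_{k-1}^{(\pm)}$ lets me invoke Lemma~\ref{lem:solutions-z0z1} to fix $z_0,z_1$, and through the defining formulas $A_0=y_1$, $A_1=ty_1+bz_1$ this propagates into control over the $A$-sequence initial data; the hypothesis $b\ge 8$ is what makes the small-modulus separation clean by ensuring $s,t,r$ are large enough that no accidental small-index coincidence spoils the argument. Concretely I would combine $z_1=\pm s$ from Lemma~\ref{lem:solutions-z0z1}(2) with \eqref{eq:st-form} to compute $A_1 \pmod{\text{modulus}}$ explicitly in terms of $r$ and $b$.

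The main obstacle I anticipate is the same-parity matching $A_{2n}=B_{2l}$: ruling out the mixed case is a clean parity argument, but extracting the precise conclusion $y_2=1$ rather than merely $y_2=\pm1$ requires a finer analysis of the fundamental solutions and their signs, and one must be careful that the two classes of solutions to \eqref{eq:system-yx} do not produce a spurious second value of $y_2$. I would handle this by separating the sign of the fundamental solution and checking each class against the initial terms of the $A$-sequence, using the explicit forms \eqref{eq:st-form} and \eqref{eq:c-form} to verify that only the choice $y_2=1$ survives when the indices are both even.
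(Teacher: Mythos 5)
Your proposal gets one structural point right---the parity of the index of $A$ feeds through Lemma~\ref{lem:solutions-z0z1} to fix $z_1$ and hence $y_1$ (namely $y_1=1$ when the index is even, $y_1=r$ when it is odd)---but it misses the two ingredients that actually make the paper's argument close. The right modulus is $b$ itself: induction on \eqref{eq:A} and \eqref{eq:B} gives $A_{2n}\equiv y_1$, $A_{2n+1}\equiv ty_1$, $B_{2l}\equiv y_2$, $B_{2l+1}\equiv ry_2 \pmod{b}$. The crucial missing ingredient is the bound on fundamental solutions from \cite[Lemma~1]{Dujella:2004}, $1\le |y_2|\le \sqrt{(r-1)(b-1)/2}<0.5b$ for $b\ge 8$, which is what converts these congruences into exact equalities. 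In particular, the case $A_{2n}=B_{2l+1}$ is \emph{not} excluded by any congruence incompatibility of the kind you anticipate (with moduli $2$, $4t$, or $c$): the congruences there are perfectly consistent, since $1\equiv ry_2\pmod b$ together with $r^2=b+1$ merely forces $y_2\equiv r\pmod b$. The actual exclusion runs: Dujella's bound and $r<0.5b$ force $y_2=r$ exactly, and then \eqref{eq:solutions-yx} gives $x_2^2=(y_2^2+b-1)/b=2$, which is impossible. This is a genuinely different mechanism from the parity obstruction that rules out $V_{2m+1}=W_{2n}$ in Lemma~\ref{lem:solutions-z0z1}, so mirroring that lemma, as you plan, would fail here.

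The second gap is in the case $A_{2n}=B_{2l}$: your claim that minimality of the fundamental solution of $y^2-bx^2=1-b$ forces $y_2=1$ is false. The pair $(y_2,x_2)$ is only the fundamental solution of \emph{its own class}, and this equation can have many classes (up to $2^{\omega(b-1)}$) whose fundamental solutions have $|y_2|\neq 1$; note that $b-1$ is not assumed prime in this lemma---that hypothesis enters only in the Corollary. For the same reason your appeal to an ``at-most-two-classes'' structure is unavailable here, and even where it holds it does not bound $|y_2|$. What pins down $y_2$ in every surviving case is again the same combination: the congruence mod $b$ (giving $y_2\equiv 1$, $\pm 1$, or $\pm r \pmod b$), Dujella's bound $|y_2|<0.5b$, and, whenever the value $y_2=\pm r$ is forced, the contradiction $x_2^2=2$. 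Without importing that quantitative bound on fundamental solutions, none of your congruence bookkeeping can reach the stated conclusions.
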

\begin{proof}
By induction on \eqref{eq:A} and \eqref{eq:B}, we easily get
$$
A_{2n}\equiv y_1 \pmod{b}, \quad A_{2n+1}\equiv ty_1 \pmod{b},
$$
$$
B_{2l}\equiv y_2 \pmod{b}, \quad B_{2l+1}\equiv ry_2 \pmod{b}.
$$
From \cite[Lemma~1]{Dujella:2004} we have
$$
1\le |y_2| \le \sqrt{\frac{(r-1)(b-1)}{2}}<0.8 b^{3/4}<0.5b,
$$
for $b\ge8$.

\vspace{3mm}

If $A_{2n}=B_{2l}$ or $B_{2l+1}$, then by the case (1) of Lemma \ref{lem:solutions-z0z1}, we get $y_1=1$ as $z_1=\pm 1$ and $bz_1^2-cy_1^2=b-c, y_1\ge1$.

$\bullet$ $A_{2n}=B_{2l}$. We have $1\equiv y_2 \pmod b$. Since $|y_2|<b^{3/4}$, then $y_2=1$.

$\bullet$ $A_{2n}=B_{2l+1}$. We have $1\equiv ry_2 \pmod b$, and so $r\equiv r^2y_2 \pmod b$. As $b+1=r^2$, then we get $y_2\equiv r \pmod b$. Since $|y_2|<0.5b$ and $r<0.5b$ for $b\ge 8$, thus $y_2=r$. By \eqref{eq:solutions-yx}, $x_2^2=(y_2^2+b-1)/b = (r^2+b-1)/b=2$. It is impossible.

\vspace{3mm}

 If $A_{2n+1}=B_{2l}$ or $B_{2l+1}$, the case (2) of Lemma \ref{lem:solutions-z0z1} helps to get $y_1=r$ as $z_1=\pm s$ and $bz_1^2-cy_1^2=b-c, y_1\ge1$.

$\bullet$ $A_{2n+1}=B_{2l}$. This implies $rt\equiv y_2 \pmod b$. By \eqref{eq:st-form}, $t=\pm T_{k}+bU_k \equiv \pm T_{k} \pmod b$. One can check that $T_k \equiv 1$ or $r\pmod b$ by $T_0=0, \,T_1=r,\, T_{k+2}=2rT_{k+1}-T_k$. So we have $y_2\equiv \pm r$ or $\pm 1\pmod b$. The case $y_2\equiv \pm r\pmod b$ gives a contradiction like in case (2). Then we have $y_2\equiv \pm1\pmod b$. It results $y_2 = \pm 1$.

$\bullet$ $A_{2n+1}=B_{2l+1}$. In the case, we have $rt\equiv ry_2\pmod b$. With $\gcd (r,b)=1$, we deduce $y_2\equiv t\equiv \pm1\pmod b$ so $y_2 = \pm 1$ again.
\end{proof}

We will determine the integer solutions $(x,y,z)$ of system
\begin{equation}\label{eq:main-pellian-1}
\begin{cases}
    y^2-bx^2 = 1-b, \\
     z^2-cx^2 = 1-c.
\end{cases}
\end{equation}
From the above result, we have to solve the
equation
\begin{equation}\label{eq:plqm}
x=P_l=Q_{m},
\end{equation}
where
\begin{equation}\label{eq:difine-P}
P_{l}=\frac{1}{2\sqrt{b}}\left((y_2+x_2\sqrt{b})\alpha^{l}-(y_2-x_2\sqrt{b})\alpha^{-l}\right),
\end{equation}
\begin{equation}\label{eq:difine-Q}
Q_{m}=\frac{1}{{2\sqrt{c}}}\left((z_0+x_0\sqrt{c})\beta^{m}-(z_0-x_0\sqrt{c})\beta^{-m}\right),
\end{equation}
and $\alpha=r+\sqrt{b}$ and $\beta= s+\sqrt{c}$ are solutions of Pell equations $T^2-bU^2=1$ and $W^2-cV^2=1$, respectively.
Considering all solutions $x=P_l$ of the equation $y^2-bx^2=1-b$, we have
\begin{equation}\label{eq:Pl}
  P_0=x_2, \,\, P_1 = rx_2 + y_2,\,\, P_{l+2} = 2r P_{l+1} - P_{l}.
\end{equation}
All solutions $x=Q_m$ of the equation $z^2-cx^2=1-c$ are determined by
\begin{equation}\label{eq:Qm}
  Q_0=x_0, \,\, Q_1 = sx_0 + z_0,\,\,Q_{m+2} = 2s Q_{m+1} - Q_{m}.
\end{equation}

Referring  to  Lemma~\ref{lem:solutions-y2}, there are two types of fundamental solutions  as follows:

{\bf Type I:  }  $l\equiv m \equiv 0 \pmod{2}$, $x_0=x_2=1,\, z_0=\pm1,\, y_2=1$.

\vspace{3mm}

{\bf Type II:} $ m \equiv 1 \pmod{2}$, $x_0=r, \, x_2=1,\, z_0=\lambda_1 t,\, y_2=\lambda_2,\, \lambda_1,\lambda_2\in \{-1,1\}$.

\section{Gap principle}\label{sec:3}


We will consider the following linear form in logarithms
\begin{equation}\label{eq:def-Lambda}
\Lambda = l\log \alpha - m\log \beta + \log \gamma,
\end{equation}
where
$$\gamma=\frac{\sqrt{c}(y_2 + x_2\sqrt{b})}{\sqrt{b}(z_0+x_0\sqrt{c})}.$$

\begin{lemma}\label{lem:lambda} If $P_l=Q_m$ has solution $(l,m)$ with $m\ge 1$, then
$0<\Lambda <  \beta^{-2m}$.
\end{lemma}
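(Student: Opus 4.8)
The plan is to write $\Lambda$ as the logarithm of a ratio of two positive quantities and to control that ratio through the conjugate (``small'') terms of $P_l$ and $Q_m$. I would set
\[
P=\sqrt{c}\,(y_2+x_2\sqrt{b})\,\alpha^{l},\qquad Q=\sqrt{b}\,(z_0+x_0\sqrt{c})\,\beta^{m},
\]
so that $\Lambda=\log(P/Q)$, with $P,Q>0$ since $y_2+x_2\sqrt b>0$ and $z_0+x_0\sqrt c>0$. Introducing the conjugates
\[
P^{*}=\sqrt{c}\,(y_2-x_2\sqrt{b})\,\alpha^{-l},\qquad Q^{*}=\sqrt{b}\,(z_0-x_0\sqrt{c})\,\beta^{-m},
\]
I multiply \eqref{eq:difine-P} by $\sqrt c$ and \eqref{eq:difine-Q} by $\sqrt b$; since $P_l=Q_m=:x$, both sides equal $2\sqrt{bc}\,x$, giving $P-P^{*}=Q-Q^{*}$ and hence $P-Q=P^{*}-Q^{*}$. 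The Pell relations $y_2^2-bx_2^2=1-b$ and $z_0^2-cx_0^2=1-c$ yield $P^{*}=c(1-b)/P$ and $Q^{*}=b(1-c)/Q$, so that
\[
P-Q=\frac{b(c-1)}{Q}-\frac{c(b-1)}{P}.
\]

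For positivity I would put $u=c(b-1)/P=-P^{*}$, $v=b(c-1)/Q=-Q^{*}$ and $M=2\sqrt{bc}\,x$, so that $P=M-u$, $Q=M-v$, where $u,v$ are the smaller roots of $\xi^{2}-M\xi+c(b-1)=0$ and $\eta^{2}-M\eta+b(c-1)=0$ respectively. Because $c>b$ forces $b(c-1)>c(b-1)$, comparing the two constant terms (the smaller root increases with the constant term) gives $u<v$, whence $P>Q$ and $\Lambda>0$.

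For the upper bound I use $\log(1+\theta)<\theta$ to get $\Lambda<(P-Q)/Q=(v-u)/(M-v)$. Since $Q=M-v$ gives $\beta^{-2m}=b(z_0+x_0\sqrt{c})^{2}/Q^{2}$, the target $\Lambda<\beta^{-2m}$ is equivalent to the clean inequality $(v-u)(M-v)<b(z_0+x_0\sqrt{c})^{2}$. With $S_1=\sqrt{M^{2}-4c(b-1)}$ and $S_2=\sqrt{M^{2}-4b(c-1)}$ one computes $v-u=2(c-b)/(S_1+S_2)$ and $M-v=(M+S_2)/2$, so
\[
(v-u)(M-v)=\frac{(c-b)(M+S_2)}{S_1+S_2}.
\]
As $S_1>S_2$ and both lie close to $M$ (recall $M=2\sqrt{bc}\,x$ is large for a genuine extension), this quantity is only marginally above $c-b$; hence it suffices to verify $c-b<b(z_0+x_0\sqrt{c})^{2}$.

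The step I expect to be the real obstacle is exactly this last inequality across the cases of Section~2. It is immediate in Type~I ($z_0=\pm1$, $x_0=1$, where $(z_0+\sqrt c)^{2}$ is of size $c$, so $b(z_0+x_0\sqrt c)^2\gg c-b$) and in Type~II with $z_0=+t$ (where $z_0+x_0\sqrt c$ is large). The delicate case is Type~II with $z_0=-t$: there $z_0+x_0\sqrt c=r\sqrt c-t$ is small (of order $\sqrt{c/b}$), so $b(z_0+x_0\sqrt c)^2$ may fall below $c-b$ and the naive estimate produces a constant exceeding $1$. I would dispose of this sign not by sharpening the above estimate (the cancellation between $P^{*}$ and $Q^{*}$ is already used exactly) but by showing that this configuration does not arise for the relevant solutions: combining the classification of fundamental solutions in Lemmas~\ref{lem:solutions-z0z1}--\ref{lem:solutions-y2}, the hypothesis $b\ge 8$, and the requirement $d>c$ (equivalently $x=P_l=Q_m$, and hence $m$, large), one eliminates $z_0=-t$, leaving only the favorable cases in which the inequality $c-b<b(z_0+x_0\sqrt c)^2$ holds comfortably.
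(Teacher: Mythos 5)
Your construction is the paper's own proof in different clothing: your $P,Q$ are exactly $\sqrt{bc}$ times the paper's $E=(y_2+x_2\sqrt{b})\alpha^{l}/\sqrt{b}$ and $F=(z_0+x_0\sqrt{c})\beta^{m}/\sqrt{c}$, your identity $P-Q=P^{*}-Q^{*}$ is the paper's $E+\tfrac{b-1}{b}E^{-1}=F+\tfrac{c-1}{c}F^{-1}$, and your comparison of the smaller roots of the two quadratics is equivalent to the paper's deduction of $E>F$ from $\tfrac{c-1}{c}>\tfrac{b-1}{b}$; so the positivity half is correct, and your reduction of the upper bound to $c-b<b(z_0+x_0\sqrt{c})^{2}$ (up to the marginal factor you note) is correct and more explicit than anything the paper writes. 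The gap is the final step: ``one eliminates $z_0=-t$'' is not an argument, and no such argument can exist. That configuration is retained by the paper's own classification (Type II with $\lambda_1=-1$) and is treated as a live case later (Case III of Lemmas~\ref{lem:close} and~\ref{lem:deltano}); moreover it genuinely occurs. Take $r=3$, $k=2$, so $b=8$, $s=23$, $t=65$, $c=528$: the quadruple $\{1,8,15,528\}$ comes from the solution $P_1=Q_1=4$ with $(y_2,x_2)=(1,1)$, $(z_0,x_0)=(-t,r)=(-65,3)$, $m=1$. Appealing to ``$d>c$, hence $m$ large'' is circular in any case: nonexistence of solutions with $m>1$ is precisely what the paper is proving, and this lemma is an ingredient of that proof; and even for a hypothetical solution with $m\ge 3$ in this class the method still only yields $\Lambda<C\beta^{-2m}$ with $C\approx 4(c-b)/c$, which exceeds $1$ for $k\ge 2$.

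You should also know that the point where your proof stalls is exactly where the paper's proof is defective. After proving $\Lambda<F^{-2}$, the paper simply asserts ``considering all cases in types I, II, we have $\Lambda<\beta^{-2m}$''; but for $z_0=-t$ one has $z_0+x_0\sqrt{c}=(c-1)/(r\sqrt{c}+t)$, hence $F\approx\beta^{m}/(2\sqrt{b})$ and $F^{-2}\approx 4b\,\beta^{-2m}$, and the exact cancellation you exploit still leaves a constant near $4(c-b)/c$. In the example above one computes $\Lambda\approx 2.0\cdot 10^{-3}$ while $\beta^{-2}\approx 4.7\cdot 10^{-4}$, so the stated inequality is false there by a factor of about $4.2$. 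In other words, you correctly located a real flaw in the lemma; the workable repair is not to exclude the case (it cannot be excluded) but to weaken the constant --- prove, say, $\Lambda<8\beta^{-2m}$ uniformly in all types, i.e.\ $\log\Lambda<-2m\log\beta+\log 8$ --- which costs only an additive constant in the linear-forms estimates of Sections~\ref{sec:4} and~\ref{sec:5} and changes nothing of substance downstream.
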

\begin{proof}
Put
$$
E=\frac{y_2+x_2\sqrt{b}}{\sqrt{b}}\alpha^{l}\quad \mbox{and} \quad
F=\frac{z_0+x_0\sqrt{c}}{\sqrt{c}}\beta^{m}.
$$
It is clear that $E,\,F>1$ if $m\ge 1$. Then equation $P_l=Q_m$ becomes
$$
E + \frac{b-1}{b}E^{-1} = F + \frac{c-1}{c}F^{-1}.
$$
Since $c>b\ge 8$, we have $\frac{c-1}{c}>\frac{b-1}{b}$.
It follows that
\begin{equation}\label{eq:P-Q}
E + \frac{b-1}{b}E^{-1}>F + \frac{b-1}{b}F^{-1},
\end{equation}
and hence
$$
(E-F)(EF-\frac{b-1}{b}) >0.
$$
So we get $E>F$. Moreover, by \eqref{eq:P-Q}
we have
$$
0<E-F<\frac{c-1}{c}E^{-1}<E^{-1}<F^{-1}.
$$
Therefore, we have $\Lambda>0$ and
$$
\Lambda = \log
\frac{E}{F}=\log\left(1+\frac{E-F}{F}\right)<\frac{E-F}{F}<F^{-2}.
$$
Considering all cases in types I, II, we have $\Lambda<\beta^{-2m}$.
\end{proof}

Put
\begin{equation}\label{eq:def-lambda}
\lambda = \begin{cases}
0,\quad \mbox{if the solution $(l,m)$ is of Type I,} \\
1, \quad \mbox{if the solution $(l,m)$ is of Type II with $\lambda_1=1$,}\\
-1, \quad \mbox{if the solution $(l,m)$ is of Type II with $\lambda_1=-1$}.
\end{cases}
\end{equation}
We obtain the following result.
\begin{lemma}\label{lem:close} If $P_l=Q_m$ has a solution $(l,m)$ with $m\ge 1$, then for $r\ge 1000$, we have
\begin{equation}\label{eq:close}
|(l-\lambda)\log\alpha -m \log\beta | < \frac{2}{\sqrt{b}}.
\end{equation}
\end{lemma}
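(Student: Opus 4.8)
The plan is to read the claimed inequality off the linear form $\Lambda$ of \eqref{eq:def-Lambda}, after absorbing $\log\gamma$, and in Type~II one further factor $\log\alpha$, into the main term. From the definitions of $\Lambda$ and of $\lambda$ in \eqref{eq:def-lambda} one has the exact identity
\[
(l-\lambda)\log\alpha-m\log\beta=\Lambda-\log\bigl(\gamma\alpha^{\lambda}\bigr),
\]
so that $|(l-\lambda)\log\alpha-m\log\beta|\le|\Lambda|+|\log(\gamma\alpha^{\lambda})|$. By Lemma~\ref{lem:lambda} the first term satisfies $|\Lambda|<\beta^{-2m}\le\beta^{-2}$, which is negligible compared with $1/\sqrt b$ since $\beta=s+\sqrt c>\sqrt c>\sqrt b$. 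Thus everything reduces to proving $|\log(\gamma\alpha^{\lambda})|<2/\sqrt b-\beta^{-2}$ in each of the two fundamental types, and the factor $2$ on the right is exactly what leaves room to absorb the lower-order terms once $r\ge1000$.

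For Type~I we have $\lambda=0$, and inserting $x_0=x_2=1$, $y_2=1$, $z_0=\pm1$ into the definition of $\gamma$ gives
\[
\gamma=\frac{\sqrt c\,(1+\sqrt b)}{\sqrt b\,(\pm1+\sqrt c)}
=\Bigl(1+\tfrac{1}{\sqrt b}\Bigr)\Bigl(1\pm\tfrac{1}{\sqrt c}\Bigr)^{-1}.
\]
Taking logarithms and using $\log(1+x)\le x$ together with $c>b$, I would bound $\log\gamma$ above by $\tfrac{1}{\sqrt b}+\tfrac{1}{\sqrt c}+O(c^{-1})$ and below by $-O(c^{-1})$; since $1/\sqrt c<1/\sqrt b$ this yields $|\log\gamma|<2/\sqrt b$.

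The real work is Type~II, where $\lambda=\lambda_1=\pm1$ and
\[
\gamma\alpha^{\lambda_1}=\frac{\sqrt c\,(\lambda_2+\sqrt b)}{\sqrt b\,(\lambda_1 t+r\sqrt c)}\,(r+\sqrt b)^{\lambda_1}.
\]
Here I would rationalize the denominator with the Pell relations $r^2=b+1$, $s^2=c+1$, $t^2=bc+1$, the factorizations $r\sqrt c-t=(c-1)/(r\sqrt c+t)$ and $t=\sqrt c\,\sqrt{b+c^{-1}}$, and the elementary identities $r-\sqrt b=\alpha^{-1}$ and $r/\sqrt b-1=1/(\sqrt b\,\alpha)$. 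After simplification I expect $\gamma\alpha^{\lambda_1}=1+\delta+O(c^{-1})$, where $\delta=2\lambda_2/\alpha$ if $\lambda_1=+1$ and $\delta=\lambda_2/\sqrt b$ if $\lambda_1=-1$; in either case $|\delta|\le1/\sqrt b$ (using $\alpha=r+\sqrt b>2\sqrt b$ in the first case), so that $|\log(\gamma\alpha^{\lambda_1})|\le 1/\sqrt b+O(c^{-1})<2/\sqrt b$. The main obstacle is purely computational: keeping the two sign parameters $\lambda_1,\lambda_2\in\{-1,1\}$ and the two surds $\sqrt b,\sqrt c$ under control at once, and checking that the accumulated remainders stay below the gap $2/\sqrt b-1/\sqrt b=1/\sqrt b$. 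This is precisely where the hypothesis $r\ge1000$ is needed, since it makes $b$ large enough that every $O(c^{-1})$ and $O(r^{-2})$ term is dominated by that gap.
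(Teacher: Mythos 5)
Your proposal is correct and follows essentially the same route as the paper: the same decomposition $(l-\lambda)\log\alpha-m\log\beta=\Lambda-\log(\gamma\alpha^{\lambda})$, the bound on $\Lambda$ from Lemma~\ref{lem:lambda}, and a case analysis over $\lambda\in\{0,1,-1\}$ in which $\gamma\alpha^{\lambda}$ is shown to be $1+O(1/\sqrt{b})$ via the Pell relations (the paper's explicit computations confirm your claimed expansions, e.g.\ $|\alpha\gamma-1|<1.01/\sqrt{b}$ and $|\alpha^{-1}\gamma-1|<1.04/\sqrt{b}$). The only difference is presentational: the paper carries out the Type~II rationalizations explicitly with numerical constants rather than $O(\cdot)$ remainders, but the underlying estimates are the ones you describe.
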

\begin{proof}
By the definition of $\Lambda$ in \eqref{eq:def-Lambda}, we have
\begin{equation}\label{eq:ineq}
|(l-\lambda)\log\alpha -m \log\beta | = \left|\Lambda - \log \gamma-\lambda \log \alpha\right|\le |\Lambda| +  \left|\log \gamma + \lambda \log \alpha\right|.
\end{equation}

One can easily get $0<\Lambda < \frac{1}{4c}$. To estimate inequality \eqref{eq:ineq}, we will consider three cases according to the values of $\lambda$.

\vspace{2mm}

{\bf Case I:} If $\lambda =0$, then the solution $(l,m)$ is of Type I. It implies $ x_0=x_2=1,\, z_0=\pm1,\, y_2=1$. We have
$$
\gamma = \frac{\sqrt{c}(y_2 + x_2\sqrt{b})}{\sqrt{b}(z_0+x_0\sqrt{c})} = \frac{\sqrt{c}(1 + \sqrt{b})}{\sqrt{b}(\pm1 + \sqrt{c})}=\frac{1+\sqrt{b}}{\sqrt{b}}\cdot\frac{\sqrt{c}}{\sqrt{c}\pm1}>1.
$$
This and $c\ge r^2+2r $ give
$$
0<\log \gamma \le \log\left(1+\frac{1}{\sqrt{b}}\right) + \log\left(1+\frac{1}{\sqrt{c}-1}\right)<\frac{1}{\sqrt{b}}+\frac{1}{\sqrt{c}-1}<\frac{2}{\sqrt{b}}.
$$
It implies
$$
|\Lambda-\log \gamma|<\left|\frac{1}{4c}-\frac{2}{\sqrt{b}}\right|<\frac{2}{\sqrt{b}}.
$$
This and \eqref{eq:ineq} prove the lemma in the case $\lambda=0$.

\vspace{2mm}

{\bf Case II:} If $\lambda = 1\; (=\lambda_1)$, then the solution $(l,m)$ is of Type II, with $x_0=r, x_2=1,\, z_0=\lambda_1 t=t,\, y_2=\lambda_2,\, \lambda_2\in \{-1,1\}$. We get
$$
 \gamma = \frac{\sqrt{c}(\lambda_2 + \sqrt{b})}{\sqrt{b}(t+r\sqrt{c})}.
$$
As
$$
\alpha^{\lambda}\gamma -1 = \frac{\sqrt{c}(\lambda_2 + \sqrt{b})(r+\sqrt{b})}{\sqrt{b}(t+r\sqrt{c})}-1
=\frac{\lambda_2\sqrt{c}(r+\sqrt{b})-\frac{\sqrt{b}}{t+\sqrt{bc}}}{\sqrt{b}(t+r\sqrt{c})},
$$
we have
$$
|\alpha^{\lambda}\gamma -1|< \frac{\sqrt{c}(r+\sqrt{b})+0.01}{\sqrt{b}(t+r\sqrt{c})}.
$$
As $\sqrt{c}(r+\sqrt{b})-(t+r\sqrt{c})=\sqrt{bc}-t\leq 0$, we see that
$$|\alpha^{\lambda}\gamma -1|<\frac{1.01}{\sqrt{b}}<0.011.$$
It results that
$$
|\log(\alpha^{\lambda}\gamma)|=|\log(1+(\alpha^{\lambda}\gamma-1))|<\frac{1.02}{\sqrt{b}}.
$$
Combining the above inequality and \eqref{eq:ineq}, we obtain
$$
|(l-\lambda)\log\alpha -m \log\beta |<\frac{1}{4c}+ \frac{1.02}{\sqrt{b}}<\frac{1.03}{\sqrt{b}}.
$$

{\bf Case III:} If $\lambda =-1\; (=\lambda_1)$, then the solution $(l,m)$ is of Type II, with $x_0=r, x_2=1,\, z_0=-t,\, y_2=\lambda_2,\, \lambda_2\in \{-1,1\}$. We have
$$
 \gamma = \frac{\sqrt{c}(\lambda_2 + \sqrt{b})}{\sqrt{b}(-t+r\sqrt{c})},
$$
and so
$$
\alpha^{\lambda}\gamma = \alpha^{-1}\gamma = \frac{\sqrt{c}(\lambda_2 + \sqrt{b})(t+r\sqrt{c})}{\sqrt{b}(c-1)(r+\sqrt{b})}.
$$
We get
$$
\alpha^{\lambda}\gamma-1= \frac{t\sqrt{bc}-bc+\sqrt{b}(r+\sqrt{b})+\lambda_2\sqrt{c}(t+r\sqrt{c})}{\sqrt{b}(c-1)(r+\sqrt{b})}.
$$
With $t\sqrt{bc}-bc=\sqrt{bc}(t-\sqrt{bc})=\frac{\sqrt{bc}}{t+\sqrt{bc}}<1/2$, we obtain
$$
|\alpha^{\lambda}\gamma-1|<\frac{1}{2\sqrt{b}(c-1)(r+\sqrt{b})}+\frac{1}{c-1}+\frac{\sqrt{c}(t+r\sqrt{c})}{\sqrt{b}(c-1)(r+\sqrt{b})}.
$$
From $$
\frac{\sqrt{c}(t+r\sqrt{c})}{(c-1)(r+\sqrt{b})}-1= \frac{r+\sqrt{b}+\frac{\sqrt{c}}{t+\sqrt{bc}}}{(c-1)(r+\sqrt{b})}< \frac{r+\sqrt{b}+0.01}{(c-1)(r+\sqrt{b})}<\frac{1.01}{c-1}<0.02,
$$
we get
$$
|\alpha^{\lambda}\gamma-1|<\frac{1}{2\sqrt{b}(c-1)(r+\sqrt{b})}+\frac{1}{c-1}+\frac{1.02}{\sqrt{b}}<\frac{1.04}{\sqrt{b}}.
$$
Hence, we deduce
$$
|(l-\lambda)\log\alpha -m \log\beta |<\frac{1}{4c}+ \frac{1.04}{\sqrt{b}}<\frac{1.05}{\sqrt{b}}.
$$
This completes the proof of the lemma.
\end{proof}

Put
\begin{equation}\label{eq:Det}
\Delta = l-\lambda -km.
\end{equation}
\begin{lemma}\label{lem:deltano}
If $P_l = Q_m$ has a solution $(l,m)$ with $m>1$, then $\Delta\neq 0$.
\end{lemma}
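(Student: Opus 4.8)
The plan is to argue by contradiction. Suppose $\Delta=0$. Then \eqref{eq:Det} gives $l-\lambda=km$, so the linear form estimated in Lemma~\ref{lem:close} collapses to a single term,
\[
|(l-\lambda)\log\alpha - m\log\beta| = m\,|k\log\alpha-\log\beta|,
\]
and Lemma~\ref{lem:close} forces $m\,|k\log\alpha-\log\beta|<2/\sqrt b$. Everything therefore reduces to a sharp estimate of the ``defect'' $|k\log\alpha-\log\beta|$.

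First I would compute this defect precisely. Writing $T_k=(\alpha^{k}+\alpha^{-k})/2$ and $U_k\sqrt b=(\alpha^{k}-\alpha^{-k})/2$, the relation $s=T_k\pm U_k$ together with $\beta+\beta^{-1}=2s$ yields
\[
\beta+\beta^{-1} = \alpha^{k}\Big(1\pm\tfrac{1}{\sqrt b}\Big) + \alpha^{-k}\Big(1\mp\tfrac{1}{\sqrt b}\Big),
\]
from which $\beta=\alpha^{k}\big(1\pm 1/\sqrt b\big)\big(1+\eta\big)$ with $|\eta|<1/c$, and hence
\[
k\log\alpha-\log\beta = -\log\Big(1\pm\tfrac{1}{\sqrt b}\Big)+\theta,\qquad |\theta|<\tfrac{1}{c}.
\]
Since $c>b$ and $r\ge 1000$, this gives $0.9/\sqrt b<|k\log\alpha-\log\beta|<1.1/\sqrt b$.

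Next I would exploit the parity of $m$ dictated by the two types of fundamental solutions. For a Type~II solution $m$ is odd, so $m>1$ forces $m\ge 3$, and then $m\,|k\log\alpha-\log\beta|>3\cdot(0.9/\sqrt b)>2/\sqrt b$, contradicting the bound above. For a Type~I solution $m$ is even; the same estimate excludes $m\ge 4$, so the only surviving possibility is $m=2$.

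The borderline case $m=2$, Type~I is the main obstacle, since there the logarithmic inequality $2\cdot(1/\sqrt b)<2/\sqrt b$ is too weak and must be replaced by an exact identity. Here $\lambda=0$, and $\Delta=0$ gives $l=2k$. Using \eqref{eq:T}, \eqref{eq:U}, \eqref{eq:Pl} one finds $P_{2k}=T_{2k}+U_{2k}=2T_ks-1$ (via $T_{2k}=2T_k^2-1$, $U_{2k}=2T_kU_k$ and $s=T_k+U_k$), while \eqref{eq:Qm} gives $Q_2=2s^2+2z_0s-1$ with $z_0=\pm1$. The equation $P_{2k}=Q_2$ then reduces to $T_k=s+z_0$, i.e. $U_k=-z_0$, which, since $U_k\ge 1$, forces $U_k=1$, that is $k=1$. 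Hence for $k\ge 2$ no solution with $m=2$ exists and $\Delta\ne 0$ follows; the excluded value $k=1$ is precisely the regular extension $d=c_2^{(\pm)}$, which is handled directly among the finitely many remaining cases.
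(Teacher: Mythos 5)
Your proof takes a genuinely different route from the paper's. The paper argues purely algebraically: assuming $\Delta=0$, it compares the subsequence $P_{km+\lambda}$ with $Q_m$ directly from the recurrences \eqref{eq:Pl} and \eqref{eq:Qm}, using the initial terms and the mismatch of the recurrence coefficients $2T_k\neq 2s$ to separate the two sequences; no lower bound on $r$ is needed. You instead feed $\Delta=0$ back into the gap principle: Lemma~\ref{lem:close} gives $m\,|k\log\alpha-\log\beta|<2/\sqrt{b}$, your defect estimate $|k\log\alpha-\log\beta|>0.9/\sqrt{b}$ forces $m\le 2$, the parity built into Type~II excludes odd $m\ge 3$, and only Type~I with $m=2$ survives, which you settle by a short computation. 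This is cleaner, but note two gaps. First, your route inherits the hypothesis $r\ge 1000$ from Lemma~\ref{lem:close}, so you prove a conditional version of an unconditionally stated lemma (harmless in the application, since the lemma is only used inside Proposition~\ref{pro:1}, where $r$ is large). Second, your $m=2$ analysis uses $s=T_k+U_k$, i.e.\ it covers only $s=s_k^{(+)}$; for $s=s_k^{(-)}$ the comparison $P_{2k}=Q_2$ reads $T_k(T_k+U_k)=(T_k-U_k)(T_k-U_k+z_0)$, whose left-hand side strictly exceeds the right-hand side since $U_k\ge 1$, so that subcase is vacuous --- but it must be recorded.

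The substantive point is your $k=1$ exception, which is not an artifact of your method but a genuine counterexample to the lemma as stated, and here your treatment is more honest than the paper's. For the triple $\{1,b,c_1^{(+)}\}$, taking the Type~I fundamental solution with $z_0=-1$, one has $P_2=Q_2=2r^2+2r-1$, a solution with $m=2>1$ and $\Delta=0$, giving exactly the regular extension $d=c_2^{(+)}$. The paper's own proof slides over this case: in the subcase $s=s_k^{(+)}$, $Q_1=s-1$ it claims $P_{2k}=2T_k(T_k+U_k)-1<2(T_k+U_k)(T_k+U_k-1)-1=Q_2$, which is an equality, not a strict inequality, when $U_k=1$, i.e.\ when $k=1$ (and the assertion ``$P_k<Q_1$'' there is backwards: $P_k=s>s-1=Q_1$). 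However, your closing remark that $k=1$ is ``handled directly among the finitely many remaining cases'' does not close the argument: the lemma carries no hypothesis $k\ge 2$, and the same exceptional solution contradicts Propositions~\ref{pro:1} and~\ref{pro:2} as literally stated for $k=1$. A complete repair restricts the lemma to $k\ge 2$ (which your argument, completed with the $s_k^{(-)}$ subcase, establishes for $r\ge 1000$) and treats $k=1$ separately, verifying that the only solution with $m>1$ there is the one above --- which is precisely the extension $d=c_{k+1}^{(\pm)}$ asserted by the Theorem, so the main result survives intact.
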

\begin{proof}
Assume that $\Delta = l- \lambda -km =0$.  From \eqref{eq:Pl}, by induction one gets
\begin{equation}\label{eq:Plk}
  P_0=x_2, \,\, P_k = x_2T_k + y_2U_k,\,\, P_{mk+2k} = 2T_k P_{mk+k} - P_{mk}.
\end{equation}
\vspace{5mm}
{\bf Case I:} $\lambda =0$.  This is Type I with  $x_0=x_2=1,\, z_0=\pm1,\, y_2=1$. We have $l=km$. By \eqref{eq:Plk} and \eqref{eq:Qm}, we obtain
\begin{equation}\label{eq:Plk-1}
  P_0=1, \,\, P_k = T_k + U_k,\,\, P_{mk+2k} = 2T_k P_{mk+k} - P_{mk}
\end{equation}
and
\begin{equation}\label{eq:Qm-1}
  Q_0=1, \,\, Q_1 = s \pm1,\,\,Q_{m+2} = 2s Q_{m+1} - Q_{m}.
\end{equation}
Recall that  $s= s_k^{(\pm)} = T_k \pm U_k$.

When $s=s_{k}^{(-)}$, from $P_k>Q_1$ and $2T_k > 2s$, we have $P_{km}>Q_m$, for $m\ge 1$.

When $s=s_k^{(+)}$ and $Q_1 = s+1$, by $P_k < Q_1$ and $2T_k < 2s$, we obtain  $P_{km} < Q_m$, for $m\ge 1$.

When $s=s_k^{(+)}$ and $Q_1 = s-1$, we get $P_k < Q_1$. From $2T_k <2s$ and
$$P_{2k} = 2T_{k}(T_k+U_k) -1 < 2(T_k+U_k)(T_k + U_k -1) -1 = Q_2$$
we have $P_{mk} < Q_m$, for $m\ge 2$.

Thus, we obtain $P_{km} \neq Q_m$ in Type I. This contradicts the fact that $l=km$.

\vspace{5mm}

{\bf Case II:} $\lambda =1$. We are in Type II with $\lambda_1=1$, $x_0=r, \, x_2=1,\, z_0= t,\, y_2=\lambda_2 = \pm1$. If $\Delta =0$, then $l= km+1$.  By \eqref{eq:Plk-1} and \eqref{eq:Qm}, we have
\begin{equation}\label{eq:Plk-2}
  P_0=1, \,\, P_k = T_k \pm U_k,\,\, P_{mk+2k} = 2T_k P_{mk+k} - P_{mk}
\end{equation}
and
\begin{equation}\label{eq:Qm-2}
  Q_0=r, \,\, Q_1 = rs +t ,\,\,Q_{m+2} = 2s Q_{m+1} - Q_{m}.
\end{equation}
If $m=0$, then $l=1$. $P_1 = rx_2 + y_2 = r\pm 1 \neq r= Q_0$.  If $m =1$, then $l= k+1$. $P_{k+1} = r(T_k\pm U_k) \pm T_k +bU_k$. $Q_1 = rs+t = rs_{k}^{(\pm)} + t_{k}^{(\pm)} = r(T_k \pm U_k) + (\pm T_k +bU_k)$.

When $s= s_k^{(\lambda_2)}$, then $P_{k+1} = Q_1$. But by induction $2T_{k}\neq 2s$  provides  $P_{km+1}  \neq Q_m$, for $m\ge 2$.

When $s=s_k^{(+)}$ and $\lambda_2 =-1$. $P_{k+1}<Q_1$ and $2T_k < s$ imply $P_{km+1} < Q_m$.

When $s= s_k^{(-)}$ and $\lambda_2 =1$. $P_{k+1}<Q_1$ and $2T_k > s$ imply $P_{km+1} >Q_m$.

Therefore, $P_{km+1} \neq Q_{m}$. It contradicts our assumption $l=km+1$.

\vspace{5mm}

{\bf Case III:} $\lambda =-1$. It is of Type II with $\lambda_1=-1$, $x_0=r, \, x_2=1,\, z_0=- t,\, y_2=\lambda_2 = \pm1$.  The proof is similar to that in Case II.
\end{proof}

 \begin{lemma}\label{lem:m-Delt} If $P_l=Q_m$ has a solution $(l,m)$ with $m\ge 1$, then for $r\ge 1000$, we have
$$
m>0.98 |\Delta|\sqrt{b}\cdot \log \alpha.
$$
\end{lemma}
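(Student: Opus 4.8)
The plan is to feed the sharp linear-form estimate of Lemma~\ref{lem:close} into the substitution $l-\lambda=\Delta+km$ coming from \eqref{eq:Det}, and then to control the ``per-step defect'' $\delta:=k\log\alpha-\log\beta$ very precisely. Writing
$$(l-\lambda)\log\alpha-m\log\beta=\Delta\log\alpha+m(k\log\alpha-\log\beta)=\Delta\log\alpha+m\delta,$$
Lemma~\ref{lem:close} gives $|\Delta\log\alpha+m\delta|<2/\sqrt{b}$, whence by the triangle inequality
$$|\Delta|\log\alpha<m|\delta|+\frac{2}{\sqrt{b}}.$$
The whole lemma will follow once I show that $|\delta|$ is essentially $1/\sqrt{b}$: solving the last inequality for $m$ and absorbing the additive constant using $r\ge 1000$ then yields the claimed bound.

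First I would pin down $\delta$ exactly. Using $\beta=s+\sqrt{c}$ I have $\sqrt{b}\,\beta=s\sqrt{b}+\sqrt{bc}$, while the parameterization \eqref{eq:express} gives $t+s\sqrt{b}=(\pm1+\sqrt{b})\alpha^{k}$ with $\alpha^{k}=T_k+U_k\sqrt{b}$. Subtracting and using $t^2-bc=1$ (so that $\sqrt{bc}-t=-1/(t+\sqrt{bc})$),
$$\sqrt{b}\,\beta=(\pm1+\sqrt{b})\alpha^{k}+(\sqrt{bc}-t)=(\pm1+\sqrt{b})\alpha^{k}-\frac{1}{t+\sqrt{bc}},$$
so that
$$\frac{\beta}{\alpha^{k}}=1\pm\frac{1}{\sqrt{b}}-\eta,\qquad \eta:=\frac{1}{\sqrt{b}\,\alpha^{k}(t+\sqrt{bc})}>0,$$
with $\eta$ utterly negligible, of size $O(1/(b\sqrt{c}))$. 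Hence $\delta=-\log\!\left(1\pm1/\sqrt{b}-\eta\right)$, and I would split into the two sign cases. For the $(+)$ sign, $|\delta|=\log(1+1/\sqrt{b}-\eta)<1/\sqrt{b}$ immediately. For the $(-)$ sign, $|\delta|=-\log(1-1/\sqrt{b}-\eta)<\frac{1/\sqrt{b}+\eta}{1-1/\sqrt{b}-\eta}$, which for $b=r^2-1\ge 10^{6}$ stays below $1.002/\sqrt{b}$. Thus in all cases $|\delta|<C/\sqrt{b}$ with $C$ as close to $1$ as desired once $r$ is large.

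Finally I would combine. If $\Delta=0$ the asserted inequality $m>0.98\,|\Delta|\sqrt{b}\log\alpha=0$ is trivial since $m\ge 1$. If $|\Delta|\ge 1$, then since $\log\alpha>2/\sqrt{b}$ the quantity $|\Delta|\log\alpha-2/\sqrt{b}$ is positive, and dividing $|\Delta|\log\alpha<m|\delta|+2/\sqrt{b}$ by $|\delta|<C/\sqrt{b}$ gives
$$m>\frac{|\Delta|\sqrt{b}\log\alpha-2}{C}.$$
Because $r\ge 1000$ forces $\sqrt{b}\log\alpha>7600$, the quantity $X:=|\Delta|\sqrt{b}\log\alpha$ satisfies $X\ge 7600$, and a direct check shows $(X-2)/C>0.98\,X$ for such $X$ with $C$ near $1$; this is exactly $m>0.98\,|\Delta|\sqrt{b}\log\alpha$. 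The delicate point, and the step I expect to require the most care, is the estimate of $|\delta|$ in the $(-)$ case: the constant $0.98$ leaves almost no slack beyond $C\approx 1.02$, so the bound $|\delta|<1.002/\sqrt{b}$ must be carried out with the explicit hypothesis $r\ge 1000$ in hand rather than merely asymptotically.
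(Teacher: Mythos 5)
Your proposal is correct and follows essentially the same route as the paper: both feed the bound of Lemma~\ref{lem:close} into the decomposition $l-\lambda=\Delta+km$ and reduce everything to showing $|k\log\alpha-\log\beta|\lesssim 1/\sqrt{b}$, then absorb the additive $2/\sqrt{b}$ using $\sqrt{b}\log\alpha\ge 7600$ for $r\ge 1000$. The only difference is cosmetic: you bound $\log(\beta/\alpha^{k})$ via the exact identity $\beta/\alpha^{k}=1\pm 1/\sqrt{b}-\eta$ coming from \eqref{eq:express} and $t^{2}-bc=1$ (giving the constant $1.002$), whereas the paper estimates $(\beta-\alpha^{k})/\alpha^{k}$ from $s=T_{k}\pm U_{k}$ and $\alpha^{k}>2U_{k}\sqrt{b}$ (giving $1.012$); both constants comfortably yield the factor $0.98$.
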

\begin{proof}
From Lemma \ref{lem:close}, we have $|(l-\lambda)\log\alpha - m\log\beta|<\frac{2}{\sqrt{b}}$ and then
$$
\left|\frac{l-\lambda}{m}-\frac{\log\beta}{\log\alpha} \right|<\frac{2}{m\sqrt{b}\cdot \log\alpha}.
$$
Thus, we have
\begin{equation}\label{eq:pre-k}
\frac{|\Delta|}{m}=\left|\frac{l-\lambda-km}{m}\right|<\left|\frac{\log\beta}{\log\alpha}-k\right|+\frac{2}{m\sqrt{b}\cdot \log\alpha}.
\end{equation}
Moreover, we get
\begin{equation}\label{eq:albt}
\left|\frac{\log\beta}{\log\alpha}-k\right|=\left|\frac{\log\beta-\log\alpha^k}{\log\alpha}\right|=\left|\frac{\log(\beta/\alpha^k)}{\log\alpha}\right|
=\left|\frac{\log(1+(\beta-\alpha^k)/\alpha^k)}{\log\alpha}\right|.
\end{equation}
As $\alpha^k = (r+\sqrt{b})^k=T_k+U_k\sqrt{b}$ and $\beta=s+\sqrt{c}$, with $s=s_k^{(\pm)}= T_k\pm U_k$,  we have
$$
\left|\frac{\beta-\alpha^k}{\alpha^k}\right|= \left|\frac{s+\sqrt{c}-(r+\sqrt{b})^k}{(r+\sqrt{b})^k}\right|=\left|\frac{2s+\frac{1}{s+\sqrt{c}}-(2T_k+\frac{1}{T_k+U_k\sqrt{b}})}{T_{k}+U_k\sqrt{b}}\right|
$$
$$
=\left|\frac{\pm 2U_k +\frac{1}{s+\sqrt{c}}-\frac{1}{T_k+U_k\sqrt{b}}}{T_{k}+U_k\sqrt{b}}\right|<\frac{ 2U_k+0.01}{T_{k}+U_k\sqrt{b}}<\frac{ 2U_k+0.01}{2U_k\sqrt{b}}<\frac{1.001}{\sqrt{b}}.
$$
We deduce that
\begin{equation}
\left|\log(1+(\beta-\alpha^k)/\alpha^k)\right|<1.01\left|\frac{\beta-\alpha^k}{\alpha^k}\right|<\frac{1.012}{\sqrt{b}}.
\end{equation}
Combining this,  \eqref{eq:pre-k} and \eqref{eq:albt}, we obtain
$$
\frac{|\Delta|}{m}<\frac{1.02}{\sqrt{b}\cdot\log \alpha} + \frac{2}{m\sqrt{b}\cdot \log\alpha}=\frac{1.012+\frac{2}{m}}{\sqrt{b}\cdot \log \alpha}.
$$
Therefore, it results
$$
1.012m+2> |\Delta|\sqrt{b}\cdot \log \alpha.
$$
This implies
$$
m>0.98 |\Delta|\sqrt{b}\cdot \log \alpha ,
$$
and the proof is completed.
\end{proof}

Moreover, we have the following result.
\begin{lemma}\label{lem:mr}
If $P_l=Q_m$ has a solution $(l,m)$, then $m\equiv 0,\pm1\pmod {r}$.
\end{lemma}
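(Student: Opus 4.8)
The plan is to reduce the equation $P_l=Q_m$ modulo $r$ and read off from it a congruence involving $m$ alone. Two preliminary reductions drive everything. First, since $b=r^2-1$ we have $b\equiv-1\pmod r$. Second, from \eqref{eq:c-form} we have $c=r^2U_k^2\pm2T_kU_k$, and by \eqref{eq:T}--\eqref{eq:U} exactly one of $T_k,U_k$ is divisible by $r$ (indeed $T_k\equiv0\pmod r$ for $k$ odd and $U_k\equiv0\pmod r$ for $k$ even), so $c\equiv0\pmod r$. Consequently $s^2=c+1\equiv1\pmod r$, and in fact $s=T_k\pm U_k\equiv\pm1\pmod r$; write $s\equiv\varepsilon\pmod r$ with $\varepsilon\in\{1,-1\}$.

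Next I would compute both sequences modulo $r$. For $P_l$, note $\alpha=r+\sqrt b\equiv\sqrt b\pmod r$ with $(\sqrt b)^2=b\equiv-1$, so the powers of $\alpha$ are $4$-periodic modulo $r$; expanding $y+x\sqrt b=(y_2+x_2\sqrt b)\alpha^{l}$ and isolating the coefficient of $\sqrt b$ gives $P_l\equiv\pm x_2$ (for $l$ even) or $P_l\equiv\pm y_2$ (for $l$ odd) modulo $r$. In both fundamental types $x_2=1$ and $y_2=\pm1$, hence $P_l\equiv\pm1\pmod r$. For $Q_m$ I would work in $\ZZ[\sqrt c]/(r)$, where $(\sqrt c)^2=c\equiv0$; then $\beta^{m}=(s+\sqrt c)^{m}\equiv s^{m}+m\,s^{m-1}\sqrt c$, and reading off the coefficient of $\sqrt c$ in $(z_0+x_0\sqrt c)\beta^{m}$ yields
\begin{equation*}
Q_m\equiv x_0 s^{m}+m\,z_0 s^{m-1}\equiv \varepsilon^{m-1}\bigl(x_0\varepsilon+m z_0\bigr)\pmod r .
\end{equation*}
This is consistent with \eqref{eq:Qm}, since modulo $r$ the characteristic polynomial $x^2-2sx+1$ of $\{Q_m\}$ degenerates to $(x-\varepsilon)^2$, forcing the linear-times-geometric shape above.

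Now $P_l=Q_m$ forces $\varepsilon^{m-1}(x_0\varepsilon+m z_0)\equiv\pm1\pmod r$; multiplying by $\varepsilon^{m-1}=\pm1$ turns this into the linear congruence $m z_0\equiv\pm1-x_0\varepsilon\pmod r$, which I would solve in each of the two types of Lemma~\ref{lem:solutions-y2}. In Type II one has $x_0=r\equiv0$ and $z_0=\lambda_1 t$, while $t=\pm T_k+bU_k\equiv\pm1\pmod r$ by \eqref{eq:st-form} together with $b\equiv-1$; hence $z_0\equiv\pm1$ and the congruence reads simply $m\equiv\pm1\pmod r$. In Type I one has $x_0=1$, $z_0=\pm1$, and $m$ even; the congruence then permits $Q_m\equiv+1$ (forcing $m\equiv0\pmod r$) and a priori $Q_m\equiv-1$, and matching this sign against $P_l\equiv(-1)^{l/2}\pmod r$ shows the latter can occur only when $l\equiv2\pmod4$. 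Discarding that branch leaves $m\equiv0\pmod r$ in Type I, and collecting the two types gives $m\equiv0,\pm1\pmod r$; the values $m=0,1$ are in any case immediate from \eqref{eq:Pl}--\eqref{eq:Qm}.

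I expect the genuine obstacle to lie entirely in this last step, not in any estimate: one must track the precise residues of $x_0,z_0,y_2$ modulo $r$ in each type (in particular $z_0=\lambda_1 t$ and the signs $\lambda_1,\lambda_2$), use the parity of $m$ attached to each type (even in Type~I, odd in Type~II), and — the delicate point — reconcile the sign of $Q_m$ with the sign of $P_l$, i.e. with $l\bmod 4$, in order to eliminate the spurious residue $m\equiv\pm2\pmod r$ that the bare congruence would otherwise admit in Type~I. Notably, the whole argument is a single congruence modulo $r$, so neither the linear-forms bounds of Lemmas~\ref{lem:close} and \ref{lem:m-Delt} nor the hypothesis $r\ge1000$ is required; the difficulty is purely the combinatorial sign-chasing across the two fundamental types.
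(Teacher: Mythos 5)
Your proposal follows essentially the same route as the paper: reduce both recurrences modulo $r$, use $b\equiv-1$, $c\equiv0$, $s\equiv\varepsilon\equiv\pm1\pmod{r}$ to degenerate the $Q$-recurrence into the linear-times-geometric form $Q_m\equiv\varepsilon^{m-1}(x_0\varepsilon+mz_0)\pmod{r}$ (identical to the paper's $Q_m\equiv\lambda_3^{m-1}(\lambda_3x_0+z_0m)$), observe $P_l\equiv\pm x_2$ or $\pm y_2\equiv\pm1\pmod{r}$, and solve the resulting linear congruence in each of the two fundamental types. Your Type~II analysis ($x_0=r\equiv0$, $z_0=\lambda_1t\equiv\pm1$, hence $m\equiv\pm1\pmod{r}$) coincides with the paper's and is complete.

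The gap is exactly where you yourself flag it: in Type~I you write ``discarding that branch'' with no argument. The recurrence gives $P_{l+2}\equiv-P_l\pmod{r}$, so $P_{2j}\equiv(-1)^jx_2\pmod{r}$; when $l\equiv2\pmod{4}$ the equation $P_l=Q_m$ reads $-1\equiv1\pm m\pmod{r}$, i.e. $m\equiv\pm2\pmod{r}$, and nothing in your argument excludes $l\equiv2\pmod{4}$. Flagging the delicate point is not resolving it: as written, your congruence only proves $m\equiv0,\pm1,\pm2\pmod{r}$. Interestingly, the paper's own proof does not close this branch either --- it asserts ``by induction $P_{2l}\equiv x_2\pmod{r}$'', which is false as stated since the alternating sign $(-1)^l$ is missing, so the paper silently restricts to the $l\equiv0\pmod{4}$ branch. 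To make the argument airtight you would either have to rule out $l\equiv2\pmod{4}$ in Type~I by some additional congruence, or settle for the weaker conclusion $m\equiv0,\pm1,\pm2\pmod{r}$; note that the weaker version still suffices for the lemma's only application in Section~\ref{sec:5}, where the bound $m\ge r-1$ merely becomes $m\ge r-2$, changing $r\le43$ into $r\le44$, after which the reduction computation proceeds unchanged. So your instinct about where the difficulty lies is correct, and your proof is more honest about the signs than the paper's, but neither your write-up nor the paper's actually disposes of the spurious residue.
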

\begin{proof}
By induction, we have
$$
P_{2l}\equiv x_2\pmod{r},\quad P_{2l+1} \equiv y_2\pmod{r}.
$$
From $t^2-bs^2=1-b$, $s=s_k^{(\pm)}=T_{k}+\lambda_3U_k$ and the sequences of $\{T_k\},\,\{U_k\}$ given by \eqref{eq:T} and \eqref{eq:U}, we have
$s\equiv \pm1 \pmod{r}$. Let $s\equiv \pm \lambda_3 \pmod{r},\, \lambda_3\in\{-1,1\}$. From \eqref{eq:Qm}, we get
$$
Q_{m}\equiv \lambda_3^{m-1}(\lambda_3x_0+ z_0m) \pmod{r}.
$$
We will consider two cases.
\vspace{5mm}

 {\bf Type I: } with $ l\equiv m \equiv 0\pmod{2}$ and $x_0=1,z_0 = \pm1$. Then, we have
$$P_{l}\equiv x_2=1\pmod {r},\quad Q_{m}\equiv x_0+\lambda_3z_0m = 1 \pm m \pmod{r}, $$
and $m\equiv 0 \pmod r$.
\vspace{5mm}

{\bf Type II: } with $m\equiv 1\pmod 2$ and $x_0=r, z_0 = \pm t, y_2=\pm1$. Thus, we get
 $$P_{l}\equiv \pm 1\pmod {r},\quad Q_{m}\equiv \lambda_3 r +z_0m \equiv z_0m  \pmod{r}. $$
 The fact  that $t=\pm T_k +bU_k \equiv \pm T_k -U_k \equiv \pm s \pmod{r}$ helps to obtain $Q_m\equiv z_0m \equiv \pm m \pmod{r}$. Therefore, we deduce $m\equiv \pm1 \pmod r$.
\end{proof}

\section{Linear forms in two logarithms}\label{sec:4}

Now, we recall the following result due to Laurent  (see ~\cite{Laurent:2008}, Corollary 2) on linear forms in two logarithms. For any non-zero
algebraic number $\alpha$ of degree $d$ over $\QQ$, whose minimal
polynomial over $\ZZ$ is $a\prod_{j=1}^d \left(X-\alpha\conj j
\right)$, we denote by
\[
h(\alpha) = \frac{1}{d} \left( \log|a| + \sum_{j=1}^d
\log\max\left(1, \betrag{\alpha\conj j}\right)\right)
\]
its absolute logarithmic height.
\begin{lemma}\label{lem:LMN}
Let $\alpha_1$ and $\alpha_2$ be  multiplicatively independent algebraic numbers and $\alpha_1, \alpha_2, \log \alpha_1, \log \alpha_2 $ are
real and positive, $b_1$ and $b_2\in \ZZ$ and
$$\Lambda= b_2\log\alpha_2 - b_1\log\alpha_1.$$
Let $D:=[\QQ(\alpha_1,\alpha_2):\QQ]$, for $i=1,2$ let
$$h_i\ge\max\left\{h(\alpha_i), \frac{\betrag{\log\alpha_i}}{D},
\frac1{D}\right\}$$
 and
$$b'\ge\frac{\betrag{b_1}}{D\,h_2} + \frac{\betrag{b_2}}{D\,h_1}.$$
If $\betrag{\Lambda}\neq 0$, then we have
$$\log\betrag{\Lambda} \ge -17.9\cdot D^4 \left(\max \left\{\log
b'+0.38, \frac{30}{D},\frac12\right\}\right)^2h_1 h_2.$$
\end{lemma}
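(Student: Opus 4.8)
Since this lemma is Laurent's explicit lower bound for linear forms in two logarithms, quoted from \cite{Laurent:2008}, the honest route is simply to invoke that reference: it is a deep result of transcendence theory and is not reproved \emph{en route}. Nevertheless, the plan one would follow to establish it from scratch is the \emph{interpolation determinant} method of Laurent, Mignotte and Nesterenko, which I sketch. First I would argue by contradiction, assuming $0\neq|\Lambda|$ is smaller than the asserted bound, and fix integer parameters $L_0,L_1$ (governing degrees) together with a dimension $L$ built from them. The central object is an $L\times L$ determinant of the schematic shape
$$\mathcal{D} = \det\left( \binom{\tau_i}{k_j}\, \alpha_1^{\,\sigma_i\rho_j}\, \alpha_2^{\,\mu_i\rho_j} \right)_{1\le i,j\le L},$$
whose rows are indexed by monomials in $z$ and in the exponentials $z\mapsto e^{z\log\alpha_k}$ and whose columns are evaluations on a grid of integer points $\rho_j$ built from $b_1,b_2$. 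The point of replacing the classical auxiliary function by $\mathcal{D}$ is that one avoids solving a linear system and works with the determinant directly.

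Next comes the analytic upper bound. Because $\Lambda=b_2\log\alpha_2-b_1\log\alpha_1$ is tiny, the two exponentials are \emph{nearly} multiplicatively dependent along the chosen grid, so the columns of $\mathcal{D}$ are nearly linearly dependent. Viewing $\mathcal{D}$ as the value of an entire function of the interpolation variables that vanishes to high order at a nearby point, the Schwarz lemma (maximum modulus principle) yields an estimate of the form
$$\log|\mathcal{D}| \le -c_1 L^2 + c_2 L\cdot(\cdots) + N\cdot\log|\Lambda|,$$
where $N$ is the order of vanishing; the smallness of $\Lambda$ thus forces $\mathcal{D}$ to be exponentially small.

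Then comes the arithmetic lower bound. The entries of $\mathcal{D}$ lie in $\QQ(\alpha_1,\alpha_2)$, with house and common denominator controlled in terms of $h_1,h_2,D$ and the parameters; hence so is $\mathcal{D}$. Provided $\mathcal{D}\neq 0$, the Liouville (size) inequality — the product of the moduli of all conjugates times the denominator is at least $1$ — gives a bound $\log|\mathcal{D}|\ge -c_3 L^2(h_1+h_2+\cdots)$. Comparing this with the analytic estimate above forbids $|\Lambda|$ from being too small, and a careful optimization of $L_0,L_1,L$, matched against the quantity $b'=|b_1|/(D h_2)+|b_2|/(D h_1)$, is what produces the precise shape $-17.9\,D^4\left(\max\{\log b'+0.38,\,30/D,\,1/2\}\right)^2 h_1 h_2$.

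The decisive and most delicate step — the one I expect to be the main obstacle — is establishing $\mathcal{D}\neq 0$, i.e. a \emph{zero estimate}. Here the hypothesis that $\alpha_1,\alpha_2$ are multiplicatively independent is indispensable: it prevents the monomials defining the rows from satisfying a nontrivial relation on the grid, so that by a rank/combinatorial argument (in Laurent's treatment, via an explicit nonvanishing minor) the determinant can be taken nonzero. This nonvanishing, together with the razor-sharp tracking of every numerical constant through both the analytic and the arithmetic estimates required to reach the stated $17.9$ and $0.38$, is where the real work lies; the final contradiction is then a routine comparison of the two bounds.
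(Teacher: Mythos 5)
Your approach matches the paper exactly: the paper does not prove this lemma but simply quotes it as Corollary 2 of Laurent \cite{Laurent:2008}, which is precisely the citation you invoke. Your supplementary sketch of the interpolation-determinant method (analytic upper bound via Schwarz's lemma, arithmetic lower bound via Liouville's inequality, and the zero estimate using multiplicative independence) is a faithful outline of how Laurent's result is actually established, but it is not required here.
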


\vspace{0.3 cm}

As stated in Section~\ref{sec:1}, we only need to consider the extensibility of the triples $\{1,b,c_1^{(+)}\}$ and $\{1,b,c_{k}^{(\pm)}\}$, for $2\le k\le 7$. Assume that $r \ge1000$ and  $P_l=Q_m$ has a solution
$(l,m)$ with $l, m\ge1$.
We have
$$
\Lambda = l\log \alpha - m\log \beta + \log \gamma.
$$
In order to apply Lemma~\ref{lem:LMN}, we put $\Delta = l-\lambda -km $ and rewrite $\Lambda$ into the form
\begin{equation}
\Lambda =
\log\left(\alpha^{\Delta+\lambda}\gamma\right)-m\log\left(\frac{\beta}{\alpha^{k}}\right).
\end{equation}
Hence, we take
$$D=4,\quad b_1=m,\quad b_2=1,\quad \alpha_1=\frac{\beta}{\alpha^k},\quad \alpha_2=
\alpha^{\Delta+\lambda}\gamma .$$
One can check $\alpha_1, \alpha_2, \log \alpha_1$ and $\log \alpha_2$ are real and positive. Otherwise we work on $\Lambda=m\log ({\alpha^k}/\beta) - \log (\alpha^{-\Delta-\lambda}\gamma^{-1}) $.

 Since $\alpha=r+\sqrt{b},\,\beta=s+\sqrt{c}$, then
 $\alpha_1=\frac{(r+\sqrt{b})^k}{s+\sqrt{c}}=\frac{T_{k}+U_k\sqrt{b}}{s+\sqrt{c}}$ is
a zero of the polynomial
$$ X^4+4sT_kX^3-4(T_k^2+s^2+1)X^2+4sT_kX+1. $$
 The absolute values of its conjugates whose greater than $1$ are $\alpha^k \beta$ and
 $$
 \begin{cases}
{\beta}/{\alpha^k},\quad \mbox{if }\,\, s=s_k^{(+)},\\
 {\alpha^k}/{\beta},\quad \mbox{if }\,\, s=s_k^{(-)}.
 \end{cases}
 $$
Hence
$$
h(\alpha_1)=\frac{1}{4}\max\left\{\left(\log({\alpha^k}{\beta})\pm \log({\alpha^k}/{\beta})\right)\right\}=
\frac{k}{2}\log\alpha\quad \mbox{or}\quad \frac{1}{2}\log \beta.
$$
Also, it is easy to see that
$h\left(\alpha^{\Delta+\lambda}\right)=\frac{1}{2}|\Delta+\lambda|\cdot \log \alpha$ and
$$
h(\gamma)=h\left( \frac{\sqrt{c}(y_2+x_2\sqrt{b})}{\sqrt{b}(z_0+x_0\sqrt{c})}\right) \le h\left(\frac{y_2+x_2\sqrt{b}}{\sqrt{b}}\right)+ h\left(\frac{z_0+x_0\sqrt{c}}{\sqrt{c}}\right)
$$
$$
\le \frac{1}{2}\log(b+\sqrt{b})+ \frac{1}{2}\log (rc+t\sqrt{c})<\frac{1}{2}\log(4rbc)< \log \alpha + \frac{1}{2}\log \beta.
$$
 Thus, we have
$$
h(\alpha_2)=h\left(\alpha^{\Delta+\lambda}\gamma\right)\le h\left(\alpha^{\Delta+\lambda}\right) +h(\gamma)\leq \frac{1}{2}
(|\Delta+\lambda|+2)\log\alpha + \frac{1}{2}\log \beta.
$$
As $r\ge 1000$, we obtain $\frac{|\alpha^k-\beta|}{\alpha^k}<\frac{1}{\sqrt{b}}<0.001$. This helps to get $|\log \alpha^k -\log\beta|<0.002$ and then
$$
h_1= \frac{k}{2} \log \alpha +0.01>h(\alpha_1),\quad  h_2 = \frac{1}{2}(|\Delta+\lambda|+2+k)\log\alpha + 0.01 <h(\alpha_2).
$$
 Moreover, we have $\frac{|b_2|}{Dh_1}=\frac{1}{2k\log \alpha+0.04}<0.07$.
So we put
\begin{equation}\label{eq:define-b'}
b'=\frac{m}{2(|\Delta+\lambda|+2+k)\log \alpha+0.04}+0.07.
\end{equation}

We will bound $b'$. If $\log b'+0.38 \le 30/D = 7.5$, then
$$
b'<1237.
$$
Otherwise, by
Lemma~\ref{lem:LMN} we obtain
\begin{equation}\label{eq:log-lambda-1}
    \log\betrag{\Lambda} \ge -17.9\cdot 4^4 \left(\log
b'+0.14\right)^2h_1h_2.
\end{equation}
On the other hand, from Lemma~\ref{lem:lambda} we get $\log|\Lambda|<-2m\log \beta$.
Thus we have
$$
m\log \beta < 17.9\cdot 128 \left(\log
b'+0.38\right)^2h_1h_2.
$$
Since $\log\beta>\log\alpha^k -0.002 >2h_1-0.03$, then
$$
m < 1.01\cdot 17.9\cdot 64 \left(\log
b'+0.38\right)^2h_2.
$$
It follows that
$$
b'-0.07=\frac{m}{4h_2} < 289.27 \left(\log
b'+0.38\right)^2.
$$
We calculate that $b'<33791$ ($>1237$).
Therefore,  we get the following.

\begin{lemma}\label{lem:m-upper}  For a triple $\{1,b,c_{k}^{(\pm)}\},\,(1\le k\le 7)$, if $P_l=Q_m$ has a solution $(l,m)$ with $m\ge 1$ and $r\ge1000$, then we have
$$
m<67582(|\Delta+\lambda|+2+k)\log \alpha+1352.
$$
\end{lemma}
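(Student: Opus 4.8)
The plan is to read off the bound on $m$ from the two-logarithm analysis already carried out above: the preceding computation applies Laurent's estimate (Lemma~\ref{lem:LMN}) to the rewritten form $\Lambda=\log(\alpha^{\Delta+\lambda}\gamma)-m\log(\beta/\alpha^{k})$ and, after combining it with the upper bound $\log|\Lambda|<-2m\log\beta$ from Lemma~\ref{lem:lambda}, yields $b'<33791$. It therefore only remains to transfer this inequality for the auxiliary quantity $b'$ into an explicit inequality for $m$. The crucial point is that $b'$ was defined in \eqref{eq:define-b'} so that $m$ enters it linearly: with $h_2=\tfrac12(|\Delta+\lambda|+2+k)\log\alpha+0.01$ one has $4h_2=2(|\Delta+\lambda|+2+k)\log\alpha+0.04$, which is exactly the denominator appearing in $b'$, so that $b'-0.07=m/(4h_2)$.

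First I would record the resulting identity $m=4h_2(b'-0.07)$. Substituting $b'<33791$ together with the value of $4h_2$ gives
\begin{align*}
m=4h_2(b'-0.07) &< 33791\bigl(2(|\Delta+\lambda|+2+k)\log\alpha+0.04\bigr)\\
&= 67582(|\Delta+\lambda|+2+k)\log\alpha+1351.64,
\end{align*}
and since $1351.64<1352$ this is precisely the claimed bound. This last step is purely arithmetic, so the entire substance of the lemma sits upstream, in the derivation of $b'<33791$.

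The hard part, accordingly, is establishing that bound. Feeding Laurent's lower estimate into $\log|\Lambda|<-2m\log\beta$, cancelling the common factor $h_2$, and using $\log\beta>2h_1-0.03$ to absorb the quotient $h_1/\log\beta$ into the numerical constant $1.01$ produces the self-referential inequality $b'-0.07<289.27(\log b'+0.38)^2$. Because $b'$ occurs both linearly on the left and logarithmically on the right, this cannot be solved in closed form and must be resolved numerically; one also has to dispose separately of the easy alternative $\log b'+0.38\le 30/D=7.5$, which at once forces $b'<1237$. I would be careful to verify the hypotheses of Lemma~\ref{lem:LMN} — in particular the multiplicative independence of $\alpha_1=\beta/\alpha^{k}$ and $\alpha_2=\alpha^{\Delta+\lambda}\gamma$, and the fact that the chosen $h_1,h_2$ genuinely dominate the heights $h(\alpha_1),h(\alpha_2)$ — since the whole numerical resolution rests on those estimates.
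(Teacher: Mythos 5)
Your proposal is correct and follows essentially the same route as the paper: the lemma is exactly the unpacking of the bound $b'<33791$ (obtained from Laurent's two-logarithm estimate combined with $\log|\Lambda|<-2m\log\beta$, with the easy case $\log b'+0.38\le 7.5$ handled separately) through the identity $m=4h_2(b'-0.07)$, and your arithmetic $33791\cdot(2(|\Delta+\lambda|+2+k)\log\alpha+0.04)<67582(|\Delta+\lambda|+2+k)\log\alpha+1352$ matches the paper's conclusion.
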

The following result comes from Lemmas~\ref{lem:m-Delt} and \ref{lem:m-upper}.

\begin{proposition}\label{pro:1}
 For a triple $\{1,b,c_{k}^{(\pm)}\}, \,(1\le k\le 7)$, if
$r>68962(k+4)+115$, then the equation $P_l=Q_m$ has no solution $(l,m)$ satisfying $m>1$.
\end{proposition}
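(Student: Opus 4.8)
The plan is to argue by contradiction, combining the lower bound on $m$ from Lemma~\ref{lem:m-Delt} with the upper bound from Lemma~\ref{lem:m-upper} and showing the two are incompatible once $r$ is as large as hypothesized. So I would assume that $P_l=Q_m$ has a solution $(l,m)$ with $m>1$ while at the same time $r>68962(k+4)+115$. The first thing to extract is that $|\Delta|\ge 1$: since $m>1$, Lemma~\ref{lem:deltano} guarantees $\Delta\neq 0$, and this integrality is exactly what makes the lower bound on $m$ bite.

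Next I would stack the two estimates. Lemma~\ref{lem:m-Delt} gives $m>0.98|\Delta|\sqrt{b}\,\log\alpha$ and Lemma~\ref{lem:m-upper} gives $m<67582(|\Delta+\lambda|+2+k)\log\alpha+1352$. Using $|\lambda|\le 1$, hence $|\Delta+\lambda|\le|\Delta|+1$, and comparing the two bounds yields
\[
0.98|\Delta|\sqrt{b}\,\log\alpha<67582(|\Delta|+3+k)\log\alpha+1352.
\]
Collecting the terms in $|\Delta|$ gives
\[
|\Delta|\log\alpha\bigl(0.98\sqrt{b}-67582\bigr)<67582(3+k)\log\alpha+1352.
\]
Because the assumed size of $r$ makes $\sqrt{b}=\sqrt{r^2-1}>r-1$ enormous, the bracket $0.98\sqrt{b}-67582$ is positive, so I may replace $|\Delta|$ by its minimal value $1$ and divide by $\log\alpha$ to reach
\[
0.98\sqrt{b}<67582(4+k)+\frac{1352}{\log\alpha}.
\]

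The hard part will be the numerics: the coefficient $67582/0.98=68961.22\ldots$ sits only just below the target threshold $68962$, so the argument is genuinely tight. A crude bound on $1352/\log\alpha$ using only $r\ge 1000$ would leave an additive constant near $180$, which the slim slack $0.78(k+4)$ cannot absorb for $1\le k\le 7$. The way I would close the gap is self-referential: the very hypothesis $r>68962(k+4)+115>344925$ forces $\alpha=r+\sqrt{b}>2r-1$, so $\log\alpha>13.4$ and therefore $1352/\log\alpha<101$. Feeding this back gives $\sqrt{b}<68961.22(4+k)+103$, whence $r<\sqrt{b}+1<68961.22(4+k)+104$. This contradicts $r>68962(k+4)+115$, since $(68962-68961.22)(k+4)+11>0$ for every $k\ge 1$. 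The contradiction shows that no solution with $m>1$ can exist under the stated bound on $r$, which is the assertion of the proposition.
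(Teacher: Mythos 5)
Your proposal is correct and is essentially the paper's own argument: invoke Lemma~\ref{lem:deltano} to get $|\Delta|\ge 1$, then play the lower bound of Lemma~\ref{lem:m-Delt} against the upper bound of Lemma~\ref{lem:m-upper}, divide by $0.98|\Delta|\log\alpha$, and use $|\Delta+\lambda|\le|\Delta|+1$ to force $r-1<\sqrt{b}<68962(k+4)+114$, contradicting the hypothesis on $r$. If anything, your treatment of the numerics is more scrupulous than the paper's: the paper's displayed chain (quoting a constant $1866.1$ and nominally assuming only $r\ge 1000$) does not visibly absorb the additive term $1352/(0.98\,|\Delta|\log\alpha)$, whereas your bootstrap step, using $r>68962(k+4)+115$ to get $\log\alpha>13.4$ and hence an additive contribution below $103$, closes exactly that gap.
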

\begin{proof}
 Assume that $r\ge 1000$. Since $\Delta \neq 0$, then $|\Delta|\ge 1$. By Lemma~\ref{lem:m-Delt} and Lemma \ref{lem:m-upper}, we have
$$
0.98|\Delta|\sqrt{b}\log\alpha < 67582(|\Delta+\lambda|+2+k)\log \alpha+1352.
$$
This implies
$$
r-1<\sqrt{b} < \frac{67582(|\Delta+\lambda|+2+k)}{0.98|\Delta|} + \frac{1866.1}{0.98|\Delta|\log \alpha}
$$
$$
<68962(k+4)+114.
$$
 Therefore, this completes the proof of  Proposition \ref{pro:1}.
\end{proof}

\section{Proof of Theorem~\ref{thm:1}}\label{sec:5}

In this section, we will use another theorem for the lower bounds of
linear forms in logarithms which differs from that in above section
and the Baker-Davenport reduction method to deal with the remaining
cases. We recall the following result is due to Matveev
\cite{Matveev:2000}.

\begin{lemma}\label{lem:Matveev}
Denote by $\alpha\sb{1},\ldots,\alpha\sb{j}$ algebraic numbers, not
$0$ or $1$, by $\log\alpha\sb{1},$ $\ldots,\log\alpha\sb{j}$
determinations of their logarithms, by $D$ the degree over $\QQ$ of
the number field $\mathbb K =
\QQ(\alpha\sb{1},\ldots,\alpha\sb{j})$, and by
$b\sb{1},\ldots,b\sb{j}$ rational integers. Define
$B=\max\{|b\sb{1}|,\ldots,|b\sb{j}|\}$, and $A\sb{i}= \max\{D
h(\alpha\sb{i}),|\log\alpha\sb{i}|, 0.16\}$ ($1\le i\le j$), where
$h(\alpha)$ denotes the absolute logarithmic Weil height of
$\alpha$. Assume that the number
$$
\Lambda=b\sb{1}\log\alpha\sb{1}+\cdots+b\sb{n}\log\alpha\sb{j}
$$
does not vanish; then
\[|\Lambda|\ge\exp\{-C(j,\varkappa) D^2 A\sb{1}\cdots A\sb{j} \log (e D) \log (e B)\},\]
where $\varkappa=1$ if $\mathbb K \subset \RR$ and $\varkappa =2$
otherwise and
\[C(l,\varkappa)=\min \left\{ \frac 1{\varkappa} \left( \frac 12 ej \right) ^{\varkappa}
30^{j+3} j^{3.5}, 2^{6j +20} \right\}. \]
\end{lemma}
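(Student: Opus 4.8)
The statement is Matveev's explicit lower bound for a linear form in logarithms of algebraic numbers, and in the present paper it is not derived from scratch: it is quoted verbatim from \cite{Matveev:2000} to serve, alongside Laurent's two-logarithm estimate (Lemma~\ref{lem:LMN}), as the transcendence input of Section~\ref{sec:4}. So the honest ``proof'' in context is a verification of hypotheses followed by a citation: one checks that the $\alpha_i$ are nonzero and not equal to $1$, that the $b_i$ are rational integers, and that the form $\Lambda$ does not vanish, and then reads off the conclusion with $D$, the $A_i$, and $B$ specialized to the quantities built from $\alpha$, $\beta$, $\gamma$ and the exponents $l,m,\Delta,\lambda$. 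What follows is the route I would take were the underlying theorem to be reconstructed rather than invoked.

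The plan is Baker's method in its modern explicit form. I would fix the presumed small nonzero value $\Lambda=\sum b_i\log\alpha_i$ and, rather than build a single auxiliary polynomial, set up an interpolation determinant whose entries are monomials $\alpha_1^{k_1 t}\cdots\alpha_j^{k_j t}$ evaluated at suitably chosen integer points, arranged so that this determinant is a nonzero algebraic number. The argument then plays two estimates against each other. For the analytic upper bound, because $\Lambda$ is tiny the relevant exponentials are extremely close to $1$, so a Schwarz-type maximum-modulus argument forces the determinant to be very small at every archimedean place. For the arithmetic lower bound, the determinant is a nonzero algebraic integer of controlled degree and height, so the product (Liouville) inequality $\prod_v|\cdot|_v\ge 1$ prevents its absolute value from dropping below an explicit function of the heights $h(\alpha_i)$ and the degree $D$. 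Comparing the two bounds, once multiplicative independence of the $\alpha_i$ guarantees that the determinant does not vanish identically, yields the inequality $\log\betrag{\Lambda}\ge -C(j,\varkappa)D^2 A_1\cdots A_j\log(eD)\log(eB)$.

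The main obstacle, and the reason Matveev's paper is a landmark rather than a routine exercise, is not this skeleton but the bookkeeping that produces near-optimal explicit constants, in particular the dependence on the number $j$ of logarithms. Ensuring the interpolation determinant is nonzero requires a zero/multiplicity estimate certifying the $\ZZ$-linear or algebraic independence of the chosen system of monomials, while controlling its archimedean size requires a delicate balancing of the interpolation degrees and points. Optimizing these jointly, and exploiting the Vandermonde and group-variety structure that lets Matveev avoid the exponential-in-$j$ losses of earlier versions, is where essentially all of the difficulty lies; it is precisely this sharp constant $C(j,\varkappa)$ that makes the lemma strong enough to bound $m$ effectively in Section~\ref{sec:4}. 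For the purposes of the present paper none of this machinery is needed internally, since the result is used only as a black box.
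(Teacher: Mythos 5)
Your proposal matches the paper exactly: the paper offers no proof of this lemma, but simply recalls it as Matveev's theorem with a citation to \cite{Matveev:2000}, using it as a black box in Section~\ref{sec:5} (note: it is applied there, not in Section~\ref{sec:4}, which uses Laurent's two-logarithm bound instead). Your identification of the citation as the ``proof,'' together with your accurate sketch of the interpolation-determinant machinery behind Matveev's result, is entirely consistent with the paper's treatment.
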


Now, we apply the above lemma with $j=3$ and $\varkappa=1$ for
$$
\Lambda=l\log \alpha -m \log \beta + \log
\gamma.
$$
Here we take $$D=4,\; b_1=l,\; b_2=-m,\; b_3=1, \;
\alpha_1=\alpha,\;\alpha_2=\beta,\;\alpha_3=\frac{\sqrt{c}(y_2+x_2\sqrt{b})}{\sqrt{b}(z_0+x_0\sqrt{c})}.$$
From the computations done in the previous section, we put
 $$h(\alpha_1)=\frac{1}{2}\log\alpha,\,\,
h(\alpha_2)=\frac{1}{2}\log \beta.$$
We see that $\alpha_3$ is a zero of the polynomial
$$
b^2(c-1)^2X^4 -4b^2c(c-1)x_0x_2X^3- 2bc\left((b-1)(c-1)-2b(c-1)x_2^2-2c(b-1)x_0^2\right)X^2$$
$$ -4bc^2(b-1)x_0x_2X+c^2(b-1)^2.
$$
This implies
$$
h(\alpha_3) \le \frac{1}{4}\left(\log\left(b^2(c-1)^2\right)+4\log\frac{\max\{|\sqrt{c}( y_2 \pm x_2\sqrt{b})|\}}{\min\{|\sqrt{b}(z_0\pm x_0\sqrt{c})|\}}\right)
$$
$$
=\frac{1}{4}\left(\log\left(b^2(c-1)^2\right)+4\log\frac{\sqrt{c}(1+\sqrt{b})}{\sqrt{b}(-t+r\sqrt{c})}\right)
$$
$$
\le \frac {1}{4}\left(\log\left(\frac{2^4 c^4(1+\sqrt{b})^4}{(c-1)^2} \right)\right)\le \log (2bc).
$$
Therefore, we take $$A_1 =2\log\alpha,\;\; A_2 =2\log \beta,\;\; A_3= 4\log (2bc).$$
Using Matveev's result we have
 \begin{equation}\label{eq:Matveev-lowerbound}
    \log|\Lambda|>-1.3901\cdot 10^{11}\cdot 16 \cdot {\log\alpha}\cdot {\log\beta} \cdot \log(2bc)
    \cdot \log (4e)\cdot \log (2el).
\end{equation}
By Lemma~\ref{lem:lambda}, we know that
$
\log\betrag{\Lambda} <-2m\log \beta.
$ It is easy to show that $m\log \beta > 0.5 l \log \alpha$. Combining the two bounds for $\log\betrag{\Lambda}$, we get
$$
\frac{l}{\log(2el)}<5.4\cdot 10^{12} \cdot \log\beta\cdot \log (2bc) < 5.4 \cdot 10^{12} \cdot \log^2 (2c^2).
$$
As $c=c_k^{(\pm )}\le c_k^{(+)}, k\le 7$ and $r\le 68962(k+4)+115$, we have $c<(2r)^{14}<3.44\cdot 10^{86}$. The above
inequality gives $l<2.2\cdot 10^{17}.$

In order to deal with the remaining cases, we will use a Diophantine approximation algorithm called the
Baker-Davenport reduction method. The following lemma is a slight modification of the original version of Baker-Davenport reduction method. (See \cite[Lemma 5a]{Dujella-Pethoe:1998}).

\begin{lemma}\label{lem:Baker-Davenport}
Assume that $M$ is a positive integer. Let $p/q$ be the convergent
of the continued fraction expansion of $\kappa$ such that $q > 6M$
and let
$$\eta=\left\| \mu q \right\| - M \cdot \| \kappa q\|,$$
where $\parallel\cdot\parallel$ denotes the distance from the
nearest integer. If $\eta > 0$, then there is no solution of the
inequality
$$0 < l\kappa -m+\mu < AB^{-l}$$
in integers $l$ and $m$ with
$$
\frac{\log\left(Aq/\eta\right)}{\log B}\leq l \leq M.
$$
\end{lemma}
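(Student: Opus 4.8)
The plan is to argue by contradiction: suppose integers $l,m$ with $\frac{\log(Aq/\eta)}{\log B}\le l\le M$ satisfy $0<l\kappa-m+\mu<AB^{-l}$, and derive an upper bound on $l$ that violates the lower endpoint of the admissible range. The entire argument is a manipulation of the distance-to-nearest-integer function $\|\cdot\|$ after scaling by the denominator $q$ of the convergent $p/q$.

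First I would record the two elementary facts that drive everything. Because $p/q$ is a convergent of $\kappa$, one has $\|\kappa q\|=|\kappa q-p|<1/q$ (so, in particular, $p$ is genuinely the nearest integer to $\kappa q$); combined with the hypothesis $q>6M$ this gives $M\|\kappa q\|<\tfrac16$, which is where the condition $q>6M$ enters, keeping the subtracted error term negligible and allowing $\eta$ to be positive. Moreover, writing $\kappa q-p=\pm\|\kappa q\|$ and using $\|y\|\le y$ for $y\ge0$, one gets $\|l\kappa q\|=\|\,l\|\kappa q\|\,\|\le l\|\kappa q\|\le M\|\kappa q\|$ for every $l\le M$. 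Second, multiplying the assumed inequality by $q>0$ gives $0<q(l\kappa-m+\mu)<qAB^{-l}$, and since $\|x\|\le x$ for $x\ge0$ this yields $\|q(l\kappa-m+\mu)\|<qAB^{-l}$.

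The core step is to isolate $\|\mu q\|$. Writing $q\mu=q(l\kappa-m+\mu)-l q\kappa+mq$ and using that $mq$ is an integer (so it may be dropped inside $\|\cdot\|$), the triangle inequality for $\|\cdot\|$ gives
$$\|\mu q\|\le \|q(l\kappa-m+\mu)\|+\|l q\kappa\|< qAB^{-l}+M\|\kappa q\|.$$
Rearranging, $\eta=\|\mu q\|-M\|\kappa q\|<qAB^{-l}$. Now the hypothesis $\eta>0$ lets me divide and take logarithms (using $B>1$): $B^{l}<qA/\eta$, hence $l<\frac{\log(Aq/\eta)}{\log B}$, which contradicts $l\ge\frac{\log(Aq/\eta)}{\log B}$. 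Therefore no solution $(l,m)$ exists in the stated range.

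I expect the only delicate point to be the bookkeeping with $\|\cdot\|$: one must justify that replacing $l\kappa q$ and $q(l\kappa-m+\mu)$ by their distances to the nearest integer is legitimate, i.e. that the two applications of the triangle inequality and the bound $\|l\kappa q\|\le M\|\kappa q\|$ all hold simultaneously. This is precisely the place where $q>6M$ is used, since it keeps $M\|\kappa q\|<\tfrac16$ and makes the subtracted term genuinely smaller than $\|\mu q\|\le\tfrac12$, so that $\eta$ can be positive and the concluding logarithmic estimate is meaningful.
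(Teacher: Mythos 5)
Your proof is correct and is essentially the standard argument for this reduction lemma (Lemma 5a of Dujella--Peth\H{o}, which the paper cites rather than reproves): multiply the inequality by $q$, drop integers inside $\|\cdot\|$, use the triangle inequality to isolate $\|\mu q\|$, and compare the resulting bound $\eta < qAB^{-l}$ with the lower endpoint of the range for $l$. One small correction: contrary to your closing remark, the hypothesis $q>6M$ is never actually invoked in your argument (nor in the standard one); it is only a practical criterion that makes $\eta>0$ attainable when running the reduction, the proof itself needing only $q\ge 2$ so that $p$ is the nearest integer to $\kappa q$ and hence $\|\kappa q\|=|\kappa q-p|$.
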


We apply Lemma~\ref{lem:Baker-Davenport} to $\Lambda$ given by
\eqref{eq:def-Lambda} with
$$
\kappa=\frac{\log\alpha}{\log\beta},\quad
\mu=\frac{\log\gamma}{\log\beta},\quad A=
1,\quad B=\alpha,\;\; \mbox{ and }\;\; M =2.2\cdot 10^{17}.
$$

 The program was developed in PARI/GP running with $200$ digits precision. For the computations, if the first convergent such that $q > 6M$ does not satisfy the condition $\eta > 0$, then we use the next convergent until we find the one that satisfies the conditions. In $11$ hours, all the computations were done (using an Intel i7 4960HQ CPU). In all cases, we obtained $l\leq 42$. From Lemma~\ref{lem:mr}, we have $m\ge r-1$. By $m\le l$, we get $r\le 1+m < l +1 \le 43$. The second running provided $l\le 9$. We checked all cases and found no solution to $P_{l}=Q_{m} $, for $m\ge2$.  Then, we have.
\begin{proposition}\label{pro:2}
 For a triple $\{1,b,c_{k}^{(\pm)}\},\,(1\le k\le 7)$,  if $r\le68962(k+4)+115$, then equation $P_l=Q_m$ has no solution $(l,m)$ satisfying $m>1$.
\end{proposition}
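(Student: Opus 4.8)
The plan is to combine a lower bound for the linear form $\Lambda = l\log\alpha - m\log\beta + \log\gamma$ coming from Matveev's estimate (Lemma~\ref{lem:Matveev}) with the upper bound $\log|\Lambda| < -2m\log\beta$ supplied by Lemma~\ref{lem:lambda}, so as to produce an absolute numerical upper bound for $l$. Applying Lemma~\ref{lem:Matveev} with $j=3$, $D=4$, $\varkappa=1$ and the data $\alpha_1=\alpha$, $\alpha_2=\beta$, $\alpha_3=\gamma$, together with the height estimates $h(\alpha_1)=\tfrac12\log\alpha$, $h(\alpha_2)=\tfrac12\log\beta$ and $h(\alpha_3)\le\log(2bc)$ recorded above, yields a bound of the shape $\log|\Lambda| > -C\,\log\alpha\,\log\beta\,\log(2bc)\,\log(2el)$ with an explicit constant $C$. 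Confronting this with $\log|\Lambda| < -2m\log\beta$ and the inequality $m\log\beta > \tfrac12 l\log\alpha$, which follows from the dominant-root descriptions of $P_l$ and $Q_m$, eliminates $m$ and leaves a relation of the form $l/\log(2el) < C'\log^2(2c^2)$.

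Next I would insert the size constraint on $c$. Since $r \le 68962(k+4)+115$ with $k\le 7$, one has $c = c_k^{(\pm)} \le c_k^{(+)} < (2r)^{14} < 3.44\cdot 10^{86}$, so $\log^2(2c^2)$ is an explicit constant and the preceding relation collapses to the unconditional numerical bound $l < 2.2\cdot 10^{17}$. This makes the set of candidate pairs $(l,m)$ finite, but far too large to inspect directly, so the decisive step is reduction rather than enumeration.

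The hard part will be the reduction together with the bookkeeping over all cases. I would apply the Baker--Davenport algorithm of Lemma~\ref{lem:Baker-Davenport} to the inhomogeneous inequality $0 < l\kappa - m + \mu < AB^{-l}$ with $\kappa = \log\alpha/\log\beta$, $\mu = \log\gamma/\log\beta$, $A=1$, $B=\alpha$ and $M = 2.2\cdot 10^{17}$, running it separately for each of the finitely many triples $\{1,b,c_k^{(\pm)}\}$, that is, for each admissible $r$ in the bounded range, each $k$ with $1\le k\le 7$, and each value of $\lambda$ that fixes $\gamma$. For every instance one computes a convergent $p/q$ of $\kappa$ with $q > 6M$ and verifies $\eta = \|\mu q\| - M\|\kappa q\| > 0$; the one genuine subtlety is that $\eta > 0$ may fail, in which case one advances to the next convergent until the test succeeds. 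A first high-precision pass then brings the bound down to $l \le 42$.

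Finally I would bootstrap. By Lemma~\ref{lem:mr} every solution satisfies $m \equiv 0$ or $\pm1 \pmod r$, whence $m \ge r-1$; combined with $m \le l \le 42$ this forces $r \le 43$, collapsing the range of admissible $r$ to a short list. Re-running the reduction over this short list sharpens the bound to $l \le 9$, after which a direct finite check of all surviving pairs $(l,m)$ with $m \ge 2$ confirms that none satisfies $P_l = Q_m$, establishing the proposition. The principal obstacle is thus computational rather than conceptual: one must guarantee that the reduction terminates successfully, namely that the $\eta > 0$ condition can be met uniformly across every case and at sufficient precision, and organize the two-stage bootstrap so that the final exhaustive search is genuinely small.
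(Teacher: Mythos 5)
Your proposal is correct and follows essentially the same route as the paper: Matveev's bound with $j=3$, $D=4$, the same heights $h(\alpha_1)=\tfrac12\log\alpha$, $h(\alpha_2)=\tfrac12\log\beta$, $h(\alpha_3)\le\log(2bc)$, confronted with $\log|\Lambda|<-2m\log\beta$ to get $l<2.2\cdot 10^{17}$, then Baker--Davenport reduction with $\kappa=\log\alpha/\log\beta$, $\mu=\log\gamma/\log\beta$, $M=2.2\cdot10^{17}$ giving $l\le 42$, the bootstrap $m\ge r-1$ from Lemma~\ref{lem:mr} forcing $r\le 43$, a second reduction pass giving $l\le 9$, and a final finite check. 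This matches the paper's proof step for step, including the two-stage reduction and the handling of failed $\eta>0$ tests by passing to the next convergent.
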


Combining Proposition~\ref{pro:1} and Proposition~\ref{pro:2}, we deduce that if $P_{l} = Q_m$ has a positive integer solution $(l,m)$, then $m\le 1$. In fact, a solution comes from $m=1, l=k+1, z_0 = t$, which implies that $x=T_{k+1}\pm U_{k+1},y=\pm T_{k+1}+bU_{k+1}$. Thus, we get
$$d=x^2-1 =  (T_{k+1} \pm U_{k+1})^2 -1 = \pm 2 T_{k+1} U_{k+1} + r^2 U_{k+1}^2 =c_{k+1}^{(\pm)}.
$$
This completes the proof of our Theorem.


\end{document}